\newcommand{\tabincell}[2]{\begin{tabular}{@{}#1@{}}#2\end{tabular}}
\newcommand{\Rmnum}[1]{\expandafter\@slowromancap\romannumeral #1@}
\let\@fnsymbol\@arabic
\begin{document}
\newtheorem{theorem}{Theorem}[section]
\newtheorem{observation}[theorem]{Observation}
\newtheorem{corollary}[theorem]{Corollary}
\newtheorem{algorithm}[theorem]{Algorithm}
\newtheorem{problem}[theorem]{Problem}
\newtheorem{question}[theorem]{Question}
\newtheorem{lemma}[theorem]{Lemma}
\newtheorem{proposition}[theorem]{Proposition}

\newtheorem{definition}[theorem]{Definition}
\newtheorem{guess}[theorem]{Conjecture}
\newtheorem{claim}[theorem]{Claim}
\newtheorem{example}[theorem]{Example}
\makeatletter
  \newcommand\figcaption{\def\@captype{figure}\caption}
  \newcommand\tabcaption{\def\@captype{table}\caption}
\makeatother

\newtheorem{acknowledgement}[theorem]{Acknowledgement}

\newtheorem{axiom}[theorem]{Axiom}
\newtheorem{case}[theorem]{Case}
\newtheorem{conclusion}[theorem]{Conclusion}
\newtheorem{condition}[theorem]{Condition}
\newtheorem{conjecture}[theorem]{Conjecture}
\newtheorem{criterion}[theorem]{Criterion}
\newtheorem{exercise}[theorem]{Exercise}
\newtheorem{notation}[theorem]{Notation}
\newtheorem{solution}[theorem]{Solution}
\newtheorem{summary}[theorem]{Summary}
\newtheorem{fact}[theorem]{Fact}

\newcommand{\pp}{{\it p.}}
\newcommand{\de}{\em}
\newcommand{\mad}{\rm mad}

\newcommand*{\QEDA}{\hfill\ensuremath{\blacksquare}}
\newcommand*{\QEDB}{\hfill\ensuremath{\square}}

\newcommand{\qf}{Q({\cal F},s)}
\newcommand{\qff}{Q({\cal F}',s)}
\newcommand{\qfff}{Q({\cal F}'',s)}
\newcommand{\f}{{\cal F}}
\newcommand{\ff}{{\cal F}'}
\newcommand{\fff}{{\cal F}''}
\newcommand{\fs}{{\cal F},s}
\newcommand{\g}{\gamma}
\newcommand{\wrt}{with respect to }

\title{Extremal graphs and classification of planar graphs by MC-numbers\footnote{Supported by NSFC No.11871034 and 11531011.}}

\renewcommand{\thefootnote}{\arabic{footnote}}

\author{\small Yanhong Gao, Ping Li, Xueliang Li\\
\small Center for Combinatorics and LPMC\\
\small  Nankai University, Tianjin 300071, China\\
\small Email: gyh930623@163.com, qdli\underline{ }ping@163.com, lxl@nankai.edu.cn\\
}

\date{}
\maketitle

\begin{abstract}
An edge-coloring of a connected graph $G$ is called a {\em monochromatic connection coloring} (MC-coloring for short) if any two vertices of $G$ are
connected by a monochromatic path in $G$.
For a connected graph $G$, the {\em monochromatic connection number} (MC-number for short) of $G$, denoted by $mc(G)$, is the maximum number of colors that ensure $G$ has a
monochromatic connection coloring by using this number of colors.
This concept was introduced by Caro and Yuster in 2011. They proved that $mc(G)\leq m-n+k$ if $G$ is not a $k$-connected graph. In this paper
we depict all graphs with $mc(G)=m-n+k+1$ and $mc(G)= m-n+k$ if $G$ is a $k$-connected but not $(k+1)$-connected graph.
We also prove that $mc(G)\leq m-n+4$ if $G$ is a planar graph, and classify all planar graphs by their monochromatic connectivity numbers.\\[2mm]
{\bf Keywords:} monochromatic connection coloring (number); connectivity; planar graph; minors.\\[2mm]
{\bf AMS subject classification (2010)}: 05C15, 05C40, 05C35.
\end{abstract}

\baselineskip16pt

\section{Introduction}

All graphs considered in this paper are simple, finite and undirected.
We use $\kappa(G)$ to denote the connectivity of $G$, and $\chi(G)$ to denote the chromatic number of $G$.
A planar graph is an {\em outerplanar graph} if it has an embedding with every vertex on the boundary of the unbounded face.
Generally, the notation $[k]$ refers to the set $\{1,2,\cdots,k\}$ of integers.
For $k$ pairwise disjoint vertex-sets $U_1,\cdots,U_k$ of $G$, we say $U_1,\cdots,U_k$ to form a complete multipartite graph
if every vertex of $U_i$ connects every vertex of $U_j$ in $G$ for any $i\neq j$.
If there is no confusion, we always use $m$ and $n$ to denote the numbers of edges and vertices of a graph, respectively.
Sometimes, we also use $e(G)$ and $|G|$ to denote the numbers of edges and vertices of graph $G$, respectively.
For a graph $G$, $d_{G}(v)$ is defined as the degree of a vertex $v$, which is the number of neighbors of $v$ in $G$.
If $d_{G}(v)=t$, then we call $v$ a {\em $t$-degree vertex} of $G$.
A forest is called a {\em linear forest} if every component of the forest is either a path or a vertex.
We use $P_n,C_n,S_n,K_n^1$ to denote a path with $n$ vertices, a cycle with $n$ edges, a star with $n$ edges and a graph obtained from $K_n$ by removing one edge, respectively.
Analogically, a $k$-path or a $k$-cycle is a path or a cycle with $k$ edges.
For an edge $e=xy$ of $G$, $G/e$ denotes a graph obtained from $G$ by deleting $e$ and then identifying $x$ and $y$,
which means replacing the two vertices by a {\em new vertex} incident to all the edges which were incident with either $x$ or $y$ in $G$.
Suppose $G$ and $H$ are vertex-disjoint graphs.
Then let $G\vee H$ denote the {\em join} of $G$ and $H$,
which is obtained from $G$ and $H$ by adding an edge between each vertex of $G$ and every vertex of $H$,
and let $G+H$ denote a graph with vertex set $V(G)\cup V(H)$ and edge set $E(G)\cup E(H)$.
If $G=H$, we also denote $G+H$ by $2G$.

An edge-coloring of $G$ is a mapping from $E(G)$ to a positive integer set, say $[k]$.
A monochromatic graph is a graph whose edges are assigned the same color.
An edge-coloring of a connected graph $G$ is called a {\em monochromatic connection coloring} (MC-coloring for short)
if any two vertices of $G$ are connected by a monochromatic path in $G$, and the edge-colored graph $G$ is called {\em monochromatic connected}.
An {\em extremal monochromatic connection coloring} (extremal MC-coloring for short) of $G$ is a
monochromatic connection coloring of $G$ that uses the maximum number of colors.
For a connected graph $G$, the {\em monochromatic connection number} (MC-number for short) of $G$, denoted by $mc(G)$, is the number of colors
in an extremal monochromatic connection coloring of $G$.

Suppose $\Gamma$ is an edge-coloring of $G$ and $i$ is a color of $\Gamma(G)$.
The {\em $i$-induced subgraph} is a subgraph of $G$ induced by all the edges with color $i$.
We also call an $i$-induced subgraph a {\em color-induced subgraph}.
Suppose the $i$-induced subgraph is $F$.
If $F$ is a single edge, then we call the color $i$ and $F$ {\em trivial}.
Otherwise, they are called {\em nontrivial}.
For a subgraph $H$ of $G$, we denote $\Gamma|_H$ as the edge-coloring of $H$ with restricting the edge-coloring $\Gamma$ of $G$ to $H$.

Some properties of the MC-colorings were discussed in \cite{CY}, we list them here.
An edge-coloring of $G$ is {\em simple} if any two nontrivial color-induced subgraphs intersect in at most one vertex.
There exists a simple extremal MC-coloring for every connected graph.
Suppose $\Gamma$ is an extremal MC-coloring of $G$. Then each color-induced subgraph in $G$ is a tree.
If there are $t$ edges in a color-induced subgraph, then we call the color {\em wastes } $t-1$ colors.
Suppose $\Gamma$ is an edge-coloring of $G$ and $\mathcal{H}$ is the set of all nontrivial color-induced subgraphs.
Then $\Gamma$ wastes $w(\Gamma)=\Sigma_{H\in \mathcal{H}}(e(H)-1)$ colors.
Thus, the number of colors used in $G$ is equal to $m-w(\Gamma)$.
If $\Gamma$ is an extremal MC-coloring of $G$,
then since each color-induced subgraph is a tree, we have that
$w(\Gamma)=\Sigma_{H\in \mathcal{H}}(e(H)-1)=\Sigma_{H\in \mathcal{H}}(|H|-2)$,
and thus $mc(G)=m-\Sigma_{H\in \mathcal{H}}(|H|-2)$.

For a connected graph $G$, we can obtain an MC-coloring by coloring a spanning tree monochromatically and coloring every other edge with a trivial color.
Therefore, $mc(G)\geq m-n+2$ for every connected graph $G$.
Caro and Yuster showed the following results.

\begin{theorem}[\cite{CY}]\label{CY-1}
Let $G$ be a connected graph with $n\geq 3$.
If $G$ satisfies any of the following properties, then $md(G)=m-n+2$.
\begin{enumerate}
\item $\overline{G}$ (the complement of $G$) is $4$-connected;
\item $G$ is triangle-free;
\item $\Delta(G)<n-\frac{2m-3(n-1)}{n-3}$;
\item the diameter of $G$ is greater than or equal to three;
\item $G$ has a cut-vertex.
\end{enumerate}
\end{theorem}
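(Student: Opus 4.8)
The plan is to lean on the lower bound $mc(G)\ge m-n+2$, already established in the excerpt by coloring a spanning tree monochromatically, so that in each of the five cases it suffices to prove the matching upper bound $mc(G)\le m-n+2$. Throughout I fix a simple extremal MC-coloring $\Gamma$ and let $\mathcal H=\{T_1,\dots,T_r\}$ be its nontrivial color-induced subgraphs; these are trees that pairwise share at most one vertex. Writing $t_i=|T_i|$ and letting $m_0$ be the number of trivial edges, the identity from the excerpt reads $mc(G)=m-\sum_i(t_i-2)$, so the whole problem collapses to the single inequality $w(\Gamma)=\sum_i(t_i-2)\ge n-2$; equivalently, the total number of colors $m_0+r$ is at most $m-n+2$. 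The one global constraint I will exploit is that every pair of vertices is joined by a monochromatic path and hence lies inside a single color class: since the nontrivial trees meet pairwise in at most one vertex, the pair-sets $\binom{V(T_i)}{2}$ are pairwise disjoint, giving the covering inequality $\sum_i\binom{t_i}{2}+m_0\ge\binom n2$ and, more usefully, the fact that every non-edge of $G$ must be covered by some nontrivial tree.

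The core mechanism is that ``savings'' (having $\sum_i(t_i-2)<n-2$) can only arise from locally dense structure, which I would isolate as a lemma: if $\Gamma$ wastes fewer than $n-2$ colors, then two nontrivial trees $T_1,T_2$ meet at a vertex $v$, and if $a$ is the tree-neighbour of $v$ in $T_1$ and $b$ the tree-neighbour of $v$ in $T_2$, then the cross pair $\{a,b\}$ must still be covered; if it is covered by a trivial edge $ab$, then $va,vb,ab$ form a triangle of $G$ through $v$. This immediately settles case (2): a triangle-free $G$ admits no savings, so $w(\Gamma)=n-2$. The delicacy, which I flag below, is that the cross pair could instead be covered by a third tree, so the argument must be upgraded from the two-tree picture to an arbitrary simple family of overlapping trees and must show that any net saving of even one color forces such a triangle somewhere.

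Case (5) is the easiest: a cut-vertex means $\kappa(G)=1$, so $G$ is not $2$-connected and the general bound $mc(G)\le m-n+k$ for non-$k$-connected graphs (with $k=2$) applies directly; alternatively one argues by hand that every tree joining the two sides of the cut passes through the cut-vertex and recovers $w(\Gamma)\ge n-2$ by a short induction on the blocks. The genuinely extremal cases are (1), (3) and (4), since they do not forbid triangles and so cannot be dispatched by the local lemma alone. For (3) I would feed the degree hypothesis $\Delta(G)<n-\frac{2m-3(n-1)}{n-3}$ into the covering inequality together with $m=m_0+\sum_i(t_i-1)$, rearranging so that a coloring with $\sum_i(t_i-2)<n-2$ would force some vertex to exceed the stated degree threshold. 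For (1) I would translate ``$\overline G$ is $4$-connected'' into the statement that $G$ admits no separation by at most three vertices into the dense pieces a savings configuration requires, and again derive a contradiction from the covering count. Case (4) I would handle by choosing $u,w$ with $d(u,w)\ge 3$, so that $N(u)\cap N(w)=\emptyset$, and using that the non-edge $uw$ must be covered by a long tree to rule out the triangle-compatible overlaps that savings demand.

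I expect the main obstacle to be twofold. First, making the local lemma precise and strong enough is delicate: the naive two-tree picture must be promoted to arbitrary simple overlapping families, and one must show that the aggregate of all overlaps producing a net saving necessarily creates a triangle (or, in the diameter-$\ge 3$ setting, a forbidden short connection). Second, the counting arguments for (1) and (3) require pinning down the exact extremal configurations that meet the covering inequality with equality and verifying that the hypotheses exclude precisely those; the algebra around the threshold in (3) and the translation of $4$-connectivity of $\overline G$ into a usable separator statement for $G$ in (1) are where the careful work lies.
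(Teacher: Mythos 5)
First, a point of order: this theorem is quoted in the paper from Caro--Yuster \cite{CY} and is given no proof there, so there is no in-paper argument to compare yours against; what follows judges your proposal on its own merits. Your framework is the correct one and matches the standard setup: the lower bound $mc(G)\ge m-n+2$ via a monochromatic spanning tree, the reduction of each case to showing $w(\Gamma)=\sum_i(t_i-2)\ge n-2$ for a simple extremal coloring $\Gamma$, the key observation that every non-edge of $G$ must lie inside a single nontrivial tree, and case (5), which indeed follows from the quoted Theorem \ref{CY-2}(2) with $k=1$. Up to that point the proposal is sound.

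Beyond that, however, the proposal is a statement of intent rather than a proof, and the gaps are exactly the substance of the theorem. For case (2), your ``local lemma'' (a saving forces a triangle) is never proved, and the escape you yourself flag is fatal as it stands: if the cross pair $\{a,b\}$ is a non-edge of $G$, it must lie in a third nontrivial tree $T_3$, which by simplicity meets $T_1$ only in $a$ and $T_2$ only in $b$; the internal vertices of the $a$--$b$ path in $T_3$ then create new pairs needing coverage, and you provide no mechanism for turning this cascade into the global bound $w(\Gamma)\ge n-2$. For cases (1), (3) and (4) there is no argument at all, only declarations (``I would feed the degree hypothesis into the covering inequality,'' ``I would translate $4$-connectivity of $\overline{G}$''); the algebra for (3) and the separator analysis for (1) are the entire content of those cases. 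Finally, be aware that your covering inequality $\sum_i\binom{t_i}{2}+m_0\ge\binom{n}{2}$ cannot carry the load by itself: in the octahedron $K_{2,2,2}$ with parts $\{a_1,a_2\},\{b_1,b_2\},\{c_1,c_2\}$, the three $2$-paths $a_1b_1a_2$, $b_1c_1b_2$, $c_1a_1c_2$ pairwise share one vertex and cover all three non-edges, giving a valid simple MC-coloring that wastes only $3<n-2=4$ colors; so savings-free conclusions can never follow from the counting and simplicity constraints alone, and any correct proof must exploit the hypothesis of each case structurally. Your proposal correctly identifies where the work lies, but it does not do that work.
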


\begin{theorem}[\cite{CY}]\label{CY-2}
Let $G$ be a connected graph. Then
\begin{enumerate}
\item $mc(G)\leq m-n+\chi(G)$;
\item $mc(G)\leq m-n+k+1$ if $G$ is not a $(k+1)$-connected graph.
\end{enumerate}
\end{theorem}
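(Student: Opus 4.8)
The plan is to deduce both inequalities from a lower bound on the number $w(\Gamma)=\sum_{H}(|H|-2)$ of wasted colors of an extremal MC-coloring $\Gamma$, via $mc(G)=m-w(\Gamma)$ and the stated fact that each color-induced subgraph is a tree. The common engine is an elementary \emph{merging} estimate: given subsets $A_1,\dots,A_s$ of a finite set $U$, form the graph on $U$ in which each $A_i$ is turned into a clique; processing the $A_i$ one at a time, each $A_i$ decreases the number of connected components by at most $|A_i|-1$, so if these cliques connect all of $U$ then $\sum_i(|A_i|-1)\ge |U|-1$ (sets with $|A_i|\le 1$ simply contribute $0$). I will apply this with $A_i$ the trace on $U$ of the $i$-th color class, choosing $U$ so that $|U|-1$ supplies the desired $\chi(G)$ or connectivity term.

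For part (1), let $\Gamma$ be an extremal MC-coloring with color-induced trees $T_1,\dots,T_c$, write $W_i=V(T_i)$, so $w(\Gamma)=\sum_i(|W_i|-2)$, and fix a proper coloring of $G$ into independent sets $V_1,\dots,V_{\chi}$ with $\chi=\chi(G)$. Every pair of vertices of a fixed class $V_a$ is a non-edge, hence is joined by a monochromatic path and thus lies in a common tree $T_i$; so the cliques $W_i\cap V_a$ connect $V_a$, and the merging estimate gives $\sum_i(|W_i\cap V_a|-1)\ge |V_a|-1$. Summing over $a$ and interchanging the two summations, the contribution of a single tree is $\sum_a(|W_i\cap V_a|-1)=|W_i|-\#\{a:\,W_i\cap V_a\neq\emptyset\}\le |W_i|-2$, where the last inequality holds because $T_i$ contains an edge whose endpoints get distinct colors, so $W_i$ meets at least two classes. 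Therefore $w(\Gamma)=\sum_i(|W_i|-2)\ge\sum_a(|V_a|-1)=n-\chi$, which is exactly $mc(G)\le m-n+\chi(G)$.

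For part (2), assume $G$ is not $(k+1)$-connected. If $G$ is complete then $k\ge n-1$ and the bound reads $mc(G)\le m-n+k+1\ge m$, so it is immediate; otherwise $G$ has a vertex cut $S$ with $|S|\le k$, and we write $V\setminus S=X\cup Y$ with $X,Y\neq\emptyset$ and no edge between $X$ and $Y$. Take an extremal $\Gamma$ with trees $T_i$. Every pair $(x,y)\in X\times Y$ lies on a common monochromatic tree, which, being connected and having no edge between $X$ and $Y$, must contain a vertex of $S$; call such a $T_i$ bridging. Since each $(x,y)$ is covered by a bridging tree, the cliques $(X\cup Y)\cap T_i$ over bridging $T_i$ connect all of $X\cup Y$, so the merging estimate gives $\sum_i(x_i+y_i-1)\ge |X|+|Y|-1$, where $x_i=|X\cap T_i|$ and $y_i=|Y\cap T_i|$. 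As a bridging tree also uses a vertex of $S$, we have $|W_i|\ge x_i+y_i+1$, whence $|W_i|-2\ge x_i+y_i-1$; summing, $w(\Gamma)\ge\sum_i(|W_i|-2)\ge |X|+|Y|-1=n-|S|-1\ge n-k-1$, that is, $mc(G)\le m-n+k+1$.

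The merging estimate and the waste identity are routine; the step I expect to be the real crux is the bookkeeping that keeps each color class from being overcharged. For (1) this is the observation that every tree meets at least two color classes (so no tree contributes more than $|W_i|-2$ after the interchange of summations), and for (2) it is checking that the bridging trees alone already connect $X\cup Y$ while each spends an extra vertex inside $S$. Verifying these two facts, rather than the merging lemma itself, is where care is needed to make the counting tight.
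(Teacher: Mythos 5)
Theorem~\ref{CY-2} is quoted in this paper from \cite{CY} without proof, so there is no internal proof to compare against; your argument has to stand on its own, and it does. Both parts rest only on facts the paper restates in the introduction --- in an extremal MC-coloring every color-induced subgraph is a tree, and $mc(G)=m-w(\Gamma)$ with $w(\Gamma)=\sum_{H}(|H|-2)$ --- plus your elementary merging estimate, which is correct (adding a clique on $A_i$ can reduce the number of components by at most $|A_i|-1$). The two delicate points you flagged are indeed the right ones, and both check out: in (1), every pair inside an independent class $V_a$ lies in a common color tree, so the traces $W_i\cap V_a$ make $V_a$ complete, and the interchange of summations is valid because every color tree contains an edge and therefore meets at least two classes, capping its contribution at $|W_i|-2$; in (2), for every $x\in X$ and $y\in Y$ some bridging tree contains both, so the traces of the bridging trees alone connect $X\cup Y$, and the vertex each bridging tree spends in $S$ gives $|W_i|-2\ge x_i+y_i-1$, while your complete-graph case disposes of graphs with no cut of size at most $k$ (note that a non-complete graph on at most $k+1$ vertices also has a cut of size at most $n-2\le k$, so the dichotomy is exhaustive). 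Two notational slips are worth fixing: sums such as $\sum_a(|W_i\cap V_a|-1)$, and the final $\sum_i(|W_i|-2)$ in (2), must be read as ranging only over nonempty intersections, respectively over bridging trees (i.e., with your $\max(\cdot,0)$ convention), since otherwise empty traces contribute $-1$; and the chain ``$mc(G)\le m-n+k+1\ge m$'' should be written $m-n+k+1\ge m\ge mc(G)$. Neither affects correctness. Your proof is in the same waste-counting spirit as the original arguments in \cite{CY}, but packaging both bounds as instances of a single merging lemma is a clean unification.
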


A graph is called {\em $s$-perfectly-connected} if it can be partitioned into $s+1$ parts $\{v\},V_1,\cdots,V_s$,
such that each $V_i$ induces a connected subgraph, $V_1,\cdots,V_s$ form a complete $s$-partite graph, and $v$ has precisely one neighbor in each $V_i$.
\begin{proposition}[\cite{CY}]\label{CY-3}
If $\delta(G)=s$, then $mc(G)\leq m-n+s$, unless $G$ is $s$-perfectly-connected, in which case $mc(G)=m-n+s+1$.
\end{proposition}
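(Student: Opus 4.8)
The plan is to argue directly through the waste function, since by the properties recalled above an extremal coloring $\Gamma$ satisfies $mc(G)=m-w(\Gamma)$ with $w(\Gamma)=\sum_{H\in\mathcal{H}}(|H|-2)$, where $\mathcal{H}$ is the set of nontrivial (tree) color classes. I would fix a vertex $v$ with $d_G(v)=\delta(G)=s$ and take $\Gamma$ to be a \emph{simple} extremal MC-coloring, which exists by the cited results. The entire proposition then reduces to the single inequality $w(\Gamma)\ge n-s-1$ together with a precise description of when it is tight.

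For the inequality I would split the nontrivial trees into those containing $v$, say $T_1,\dots,T_p$, and those that do not, and write $U_i=V(T_i)\setminus\{v\}$. Since $\Gamma$ is simple, any two of the $T_i$ meet only in $v$, so the sets $U_i$ are pairwise disjoint and $\sum_i|U_i|=\sum_i(|T_i|-1)$. Because every vertex other than $v$ must be joined to $v$ by a monochromatic path, and any such path starts with an edge at $v$, the set $V\setminus\{v\}$ is covered by the $U_i$ together with the other endpoints of the trivial edges at $v$; writing $q$ for the number of trivial edges at $v$, this gives $\sum_i|U_i|+q\ge n-1$. Finally $s=\sum_i d_{T_i}(v)+q\ge p+q$ because each $d_{T_i}(v)\ge1$. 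Combining,
\[
w(\Gamma)\ \ge\ \sum_{i=1}^{p}(|T_i|-2)\ =\ \sum_i|U_i|-p\ \ge\ (n-1-q)-p\ \ge\ \bigl(n-1-(s-p)\bigr)-p\ =\ n-s-1,
\]
which already yields $mc(G)\le m-n+s+1$ for every graph with $\delta(G)=s$.

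The heart of the proof is the equality analysis. Tracing the three inequalities, $w(\Gamma)=n-s-1$ forces simultaneously: (i) there is no nontrivial tree avoiding $v$; (ii) $d_{T_i}(v)=1$, so $v$ is a leaf of every $T_i$; and (iii) the sets $U_1,\dots,U_p$ together with the $q=s-p$ endpoints of the trivial edges at $v$ partition $V\setminus\{v\}$ into exactly $s$ nonempty parts, each joined to $v$ by a single edge. Deleting the leaf $v$ from $T_i$ shows each tree-part induces a connected subgraph, and each singleton part is trivially connected, so these $s$ parts already match the shape of the partition $\{v\},V_1,\dots,V_s$. The decisive step is to show they form a complete $s$-partite graph: given $x\in U_i$ and $y\in U_j$ with $i\ne j$, no tree $T_k$ can contain both (all of its vertices other than $v$ lie in one single $U_k$), and by (i) there is no other nontrivial class, so the only monochromatic $x$--$y$ path allowed is a single trivial edge $xy$. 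Hence $xy\in E(G)$ for all such pairs, which is exactly complete $s$-partiteness, and $G$ is $s$-perfectly-connected. I expect this cross-part adjacency argument to be the main obstacle, since it is where the numerical waste bound is upgraded to a rigid structural conclusion.

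It remains to check the converse, that an $s$-perfectly-connected graph attains the bound. Here I would display the coloring explicitly: for each $i$, color a spanning tree of the connected graph $G[V_i]$ together with the unique edge from $v$ into $V_i$ by color $i$, and give every remaining edge its own trivial color. Every color class is then a tree; within-part pairs and pairs $v,x$ with $x\in V_i$ are connected in color $i$; and any cross-part pair is adjacent by complete $s$-partiteness and hence joined by its trivial edge, so the coloring is a valid MC-coloring. Its waste is $\sum_i(|V_i|-1)=(n-1)-s$, giving $mc(G)\ge m-n+s+1$, which together with the upper bound forces equality. Thus $mc(G)\le m-n+s$ unless $G$ is $s$-perfectly-connected, in which case $mc(G)=m-n+s+1$.
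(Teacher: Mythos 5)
Your proof is correct and complete. A contextual remark first: the paper does not prove Proposition~\ref{CY-3} at all --- it is quoted from Caro and Yuster \cite{CY} without proof --- so there is no internal argument to compare yours against; your waste-counting analysis at a fixed vertex of minimum degree is, as far as the original source goes, essentially the natural (and the original) route. All steps check out: the chain $w(\Gamma)\ge\sum_{i=1}^{p}(|T_i|-2)=\sum_i|U_i|-p\ge (n-1-q)-p\ge n-s-1$ uses only the simplicity of $\Gamma$, the fact (stated in the paper, from \cite{CY}) that color classes of an extremal MC-coloring are trees, and $d_G(v)=s$; the equality analysis correctly forces (i) every nontrivial tree to contain $v$, (ii) $v$ to be a leaf of each $T_i$, and (iii) the sets $U_i$ together with the trivial neighbors of $v$ to partition $V\setminus\{v\}$ into $s$ parts, each receiving exactly one edge from $v$. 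Your key step --- that a pair of vertices lying in different parts can only be monochromatically connected by a trivial edge, since no $T_k$ meets two distinct parts and no nontrivial tree avoids $v$ --- is exactly what converts the numerical tightness into complete $s$-partiteness; note it also covers pairs involving the singleton parts, whose vertices lie in no nontrivial tree whatsoever. The converse construction, one tree per part wasting $|V_i|-1$ colors for a total waste of $n-1-s$, is likewise correct and closes the equivalence.
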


In \cite{JLY}, the authors characterized all graphs with $mc(G)=m-n+\chi(G)$.
In \cite{LL, LL1}, the authors generalized the concept of MC-coloring.
For more knowledge about the monochromatic connection of graphs, we refer to \cite{BL, CLW, GLQZ, JLW, LW2, MWYY}.
In \cite{CY}, Caro and Yuster showed that the bound of the second result is sharp, and they
studied wheel graphs, outerplanar graphs and planar graphs with minimum degree three.

Let $\mathcal{S}$ be a set of trees. Then we use $V(\mathcal{S})$ to denote $\bigcup_{T\in \mathcal{S}}V(T)$, and $|\mathcal{S}|$ to denote
the number of trees in $\mathcal{S}$. Suppose $G$ is a $k$-connected graph and $\Gamma$ is an MC-coloring of $G$.
Let $S=\{w_1,\cdots,w_k\}$ be a vertex-cut of $G$ and $A_1,\cdots, A_t$ be the components of $G-S$.
For a vertex $x\in V(A_i)$, we always use $\mathcal{T}_x$ to denote the set of nontrivial trees connecting $x$ and a vertex in $\bigcup_{j\neq i}A_j$ in this paper.
Since $x$ connects every vertex of $\bigcup_{j\neq i}A_j$ by a nontrivial tree, we have $\bigcup_{j\neq i}V(A_j)\subseteq V(\mathcal{T}_x)$.

This paper is organized as follows. In Section 2, we depict all graphs with $mc(G)=m-n+\kappa(G)+1$ and $mc(G)=m-n+\kappa(G)$, respectively.
In Section 3, we classify all planar graphs by their monochromatic connection numbers.

\section{Extremal graphs of $k$-connected graphs}

For a $k$-connected graph $G$, we know that $mc(G)\leq m-n+k+1$.
In this section, we depict all graphs with $mc(G)=m-n+k+1$ and $mc(G)=m-n+k$, respectively.
These results will be used in the next section for classification of planar graphs.

Let $\mathcal{A}_{n,k}$ be the set of graphs $K_{k-1}\vee H$, where $H$ is a connected graph with $|H|=n-k+1$ and $H$ has a cut-vertex.

\begin{theorem}\label{Ex-mc-thm1}
Suppose $k\geq 2$ and $G$ is a $k$-connected but not $(k+1)$-connected graph.
Then $mc(G)=m-n+k+1$ if and only if either $G\in \mathcal{A}_{n,k}$ or $G$ is a $k$-perfectly-connected graph.
\end{theorem}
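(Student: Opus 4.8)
```latex
\noindent\textbf{Proof proposal.}
The plan is to prove the theorem by analyzing the structure forced on $G$ by an extremal MC-coloring that uses the maximum possible number $m-n+k+1$ of colors. Recall from the discussion in the introduction that for an extremal (simple) MC-coloring $\Gamma$ with nontrivial color-induced trees $\mathcal{H}$, we have $mc(G)=m-\sum_{H\in\mathcal{H}}(|H|-2)$. Thus $mc(G)=m-n+k+1$ is equivalent to $\sum_{H\in\mathcal{H}}(|H|-2)=n-k-1$. The first step is to fix a vertex-cut $S=\{w_1,\dots,w_k\}$ (which exists since $G$ is $k$-connected but not $(k+1)$-connected) with components $A_1,\dots,A_t$ of $G-S$, and to count carefully how many colors the trees must waste in order to monochromatically connect vertices lying in distinct components. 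Every pair of vertices in different components must be joined by a monochromatic path, and every such path must pass through $S$; this is where the tight budget $n-k-1$ on wasted colors will constrain the sizes and the overlap pattern of the trees $\mathcal{T}_x$.

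\noindent
The forward direction (sufficiency) should be the routine part: for $G\in\mathcal{A}_{n,k}$, I would write $G=K_{k-1}\vee H$ with $H$ connected on $n-k+1$ vertices having a cut-vertex, and exhibit an explicit MC-coloring achieving $m-n+k+1$ colors. The idea is to use the $K_{k-1}$ together with the cut-vertex of $H$ to form a small family of spanning ``star-like'' monochromatic trees through which all cross-connections are routed, counting the wasted colors directly to confirm equality; the cut-vertex in $H$ is exactly what lets one save the extra color over the generic bound $m-n+k$. For the $k$-perfectly-connected case I would invoke Proposition~\ref{CY-3}: a $k$-perfectly-connected graph has a vertex of degree $k$, so $\delta(G)=k$, and the proposition gives $mc(G)=m-n+k+1$ directly, after checking that such a graph is indeed $k$-connected but not $(k+1)$-connected.

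\noindent
The reverse direction (necessity) is where the real work lies. Assuming $mc(G)=m-n+k+1$, so that the total waste is exactly $n-k-1$, I would argue that the family of nontrivial trees must be as economical as possible. The key counting step is to show that connecting the components through the $k$-cut $S$ already forces a waste of at least $n-k-1$, with equality only in two rigid configurations. Concretely, I expect to show that either almost all of the connecting is done through a single vertex of $S$ playing the role of the apex ``$v$'' (yielding the $k$-perfectly-connected structure, where each $V_i$ is connected, the $V_i$ form a complete multipartite graph, and $v$ has one neighbor per part), or else $k-1$ of the cut-vertices are joined to everything (forcing a $K_{k-1}$ joined to the rest, i.e. the $G=K_{k-1}\vee H$ form), with the single remaining degree of freedom concentrated at a cut-vertex of $H$. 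The main obstacle will be ruling out all intermediate or hybrid tree-families that a priori might also meet the waste budget $n-k-1$: I anticipate a somewhat delicate case analysis on how the trees $\mathcal{T}_x$ intersect $S$ and on the number of components $t$, using the simplicity of the coloring (any two nontrivial trees meet in at most one vertex) together with the tree property of each color class to derive the sharp lower bound on waste and to pin down exactly when it is attained.
```
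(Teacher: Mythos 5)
Your sufficiency argument is essentially the paper's: for $G\in\mathcal{A}_{n,k}$ one colors a spanning tree of $H$ with a single nontrivial color (all other edges trivial), which wastes exactly $n-k-1$ colors, and the $k$-perfectly-connected case follows from Proposition~\ref{CY-3} since the apex has degree $k$ and $k$-connectivity forces $\delta(G)=k$. One correction even here: the cut-vertex of $H$ is not what ``lets one save the extra color''---the spanning-tree coloring achieves $m-n+k+1$ colors for any connected $H$; the cut-vertex's role is to make $G$ fail to be $(k+1)$-connected, so that the theorem's hypothesis (and the matching upper bound $mc(G)\le m-n+k+1$ from Theorem~\ref{CY-2}) applies.

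The necessity direction is where the whole difficulty lies, and your proposal does not carry it out: ``I expect to show'', ``I anticipate a somewhat delicate case analysis'', and ``the main obstacle will be ruling out all intermediate or hybrid tree-families'' are precisely the content of the proof, and none of it is supplied. Concretely, the paper's argument rests on a dichotomy your outline never identifies: either (Case 1) some component $A_1$ of $G-S$ has a vertex $u$ with $V(A_1)\subseteq V(\mathcal{T}_u)$, or (Case 2) every vertex of every component fails to reach some vertex of its own component through its nontrivial trees. In Case 1 the decisive step is that, by simplicity, the trees of $\mathcal{T}_u$ pairwise meet only in $u$, so their union is a single tree containing all $n-k$ vertices outside $S$ plus at least $|\mathcal{T}_u|$ vertices of $S$; the family therefore wastes at least $|V(\bigcup\mathcal{T}_u)|-|\mathcal{T}_u|-1\ge n-k-1$ colors, forcing $\mathcal{T}_u$ to be the set of \emph{all} nontrivial trees, each meeting $S$ in exactly one vertex. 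From this, $A_1=\{u\}$ gives $N(u)=S$ and hence the $k$-perfectly-connected outcome via Proposition~\ref{CY-3}, while $|A_1|\ge 2$ forces a unique nontrivial tree spanning $G-S'$ with $G[S']=K_{k-1}$, i.e.\ $G\in\mathcal{A}_{n,k}$. Case 2 is then killed by a second count: with $A=V(A_1)-V(\mathcal{T}_u)$ and $\mathcal{F}$ the trees joining a fixed $w\in V(A_2)$ to $A$, the tight budget forces $\mathcal{S}=\mathcal{T}_u\cup\mathcal{F}$, and one exhibits $v\in A$ and $w'\in V(A_2)-V(\mathcal{T}_w)$ connected by no tree of $\mathcal{S}$, a contradiction. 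Your outline also misplaces the apex in the $k$-perfectly-connected configuration: it is not ``a single vertex of $S$'' but a component vertex $u$ whose neighborhood equals $S$, with the vertices of $S$ distributed one per part $V_i$. Without the dichotomy, the union-is-a-tree counting step, and the Case 2 contradiction, what you have is a plan rather than a proof.
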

\begin{proof}
If $G$ is a $k$-perfectly-connected graph, then by Proposition \ref{CY-3}, $mc(G)=m-n+k+1$.
If $G=K_{k-1}\vee H$ is a graph in $\mathcal{A}_{n,k}$, then let $\Gamma$ be an edge-coloring of $G$ such that
a spanning tree of $H$ is the only nontrivial tree. Then $\Gamma$ is an MC-coloring of $G$ and $\Gamma$ wastes $n-k-1$ colors.
Thus, $mc(G)=m-n+k+1$.

Next, we prove that either $G\in \mathcal{A}_{n,k}$ or $G$ is a $k$-perfectly-connected graph if $mc(G)=m-n+k+1$.
Let $\Gamma$ be an extremal MC-coloring of $G$ and $\mathcal{S}$ be the set of all non-trivial trees.
Let $S=\{w_1,\cdots,w_k\}$ be a vertex-cut and $A_1,\cdots, A_t$ be the components of $G-S$.

{\bf Case 1.} There is a component, say $A_1$, and a vertex $u$ of $A_1$, such that $V(A_1)\subseteq V(\mathcal{T}_u)$.

Let $\mathcal{T}_u=\{T_1,\cdots,T_r\}$.
Since $u$ connects every vertex of $\bigcup_{i=2}^tV(A_i)$ by a nontrivial tree in $\{T_1,\cdots,T_r\}$, we have
$\bigcup_{i\in[t]}V(A_i)\subseteq V(\bigcup_{i\in[r]}T_i)$.
Since any two trees of $\{T_1,\cdots,T_r\}$ share a common vertex $u$ and $\Gamma$ is simple, we have $\bigcup_{i\in[r]}T_i$ is a tree.
Moreover, $|V(\bigcup_{i\in[r]}T_i)\cap S|\geq r$.
Therefore, $\bigcup_{i\in[r]}T_i$ wastes at least $n-(k-r)-1-r=n-k-1$ colors.
Since $mc(G)=m-n+k+1$, we have $\mathcal{S}=\{T_1,\cdots,T_r\}$ and $|V(\bigcup_{i\in[r]}T_i)\cap S|= r$.
Thus, $|V(T_i)\cap S|=1$, say $V(T_i)\cap S=\{w_i\}$.

If $A_1=\{u\}$, then $N(u)=S$.
Since $G$ is a $k$-connected graph, we have $\delta(G)=k$.
By Proposition \ref{CY-3}, $mc(G)=m-n+k+1$ implies that $G$ is a $k$-perfectly-connected graph.

If $|A_1|\geq 2$, then $r=1$.
Otherwise, there are at least two nontrivial trees in $\mathcal{S}$. Suppose $v\in V(A_1)-u$ and $v\in V(T_1)$.
Let $w\in (\bigcup_{i=2}^tV(A_i))\cap V(T_2)$.
Then there is a nontrivial tree $T_j$ connecting $w$ and $v$.
Since $v\in V(T_j)$ and $v\notin V(T_2)$, $T_j\neq T_2$.
However, $\{u,w\}\subseteq V(T_j)\cap V(T_2)$, a contradiction.
Therefore, $\mathcal{S}=\{T_1\}$.
Since $mc(G)=m-n+k+1$, we have $|T_1|=n-k+1$.
Recall that $V(T_1)\cap S=\{w_1\}$.
Let $S'=S-w_1$.
Then $T_1$ is a spanning tree of $G-S'$.
Thus, $G-S'$ is connected and $w_1$ is a cut-vertex of $G-S'$.
Since $T_1$ is the unique nontrivial tree of $G$, we have $G[S']=K_{k-1}$ and $G=G[S']\vee (G-S')$.
Therefore, $G\in \mathcal{A}_{n,k}$.

{\bf Case 2.} For each component $A_i$ of $G-S$ and each vertex $u\in V(A_i)$, $V(A_i)-V(\mathcal{T}_u)\neq \emptyset$.

For a vertex $u$ of $A_1$, let $A=V(A_1)-V(\mathcal{T}_u)$ and $v\in A$.
Let $w\in V(A_2)$, and let $\mathcal{F}$ be the set of nontrivial trees connecting $w$ and a vertex of $A$.
Since $\Gamma$ is simple, we have $|V(\mathcal{T}_u)\cap S|\geq |\mathcal{T}_u|$ and $|V(\mathcal{F})\cap S|\geq |\mathcal{F}|$.
So, $\mathcal{T}_u$ wastes at least $n-k-|A|-1$ colors and $\mathcal{F}$ wastes at least $|A|$ colors.
Since $mc(G)=m-n+k+1$, $\mathcal{T}_u$ wastes precisely $n-k-|A|-1$ colors, $\mathcal{F}$ wastes precisely $|A|$ colors and $\mathcal{S}=\mathcal{T}_u\cup \mathcal{F}$.
That $\mathcal{F}$ wastes precisely $|A|$ colors implies that $V(A_2)\cap V(T)=\{w\}$ for each $T\in \mathcal{F}$.
Since $V(A_2)\nsubseteq V(\mathcal{T}_w)$, there is at least one vertex in $V(A_2)-V(\mathcal{T}_w)$, say $w'\in V(A_2)-V(\mathcal{T}_w)$.
Then there is no tree of $\mathcal{T}_u\cup \mathcal{F}$ that contains both $v$ and $w'$, which contradicts that $\mathcal{S}=\mathcal{T}_u\cup \mathcal{F}$.
\end{proof}

Let $\mathcal{B}^1_{n,k}$ be the set of graphs $G$ satisfying the following conditions:

$\bullet$ $G$ is $k$-connected.

$\bullet$ $V(G)$ can be partitioned into $k$ nonempty sets $\{u\},U_1,\cdots,U_{k-1}$ such that each $G[U_i\cup u]$ is connected and $U_1,\cdots,U_{k-1}$ form a complete $(k-1)$-partite graph.

$\bullet$ There is an integer $t\in[k-1]$, such that $u$ has precisely two neighbors in $U_t$ and $u$ has precisely one neighbor in $U_i$ for $i\neq t$.

$\bullet$ $G$ is neither a $k$-perfectly-connected graph nor a graph of $\mathcal{A}_{n,k}$.

Let $\mathcal{B}^2_{n,k}$ be the set of graphs $G$ satisfying the following conditions:

$\bullet$ $G$ is a $k$-connected graph, $V(G)$ can be partitioned into two parts $U,V$ such that $G[U]=K_{k-2}$, $G[V]$ is a $2$-connected but not a $3$-connected graph and $G=G[U]\vee G[V]$;

$\bullet$ $G$ is neither a $k$-perfectly-connected graph nor a graph of $\mathcal{A}_{n,k}$.

Let $\mathcal{B}^3_{n,k}$ be the set of graphs $G$ satisfying the following conditions:

$\bullet$ $G$ is a $k$-connected graph, $V(G)$ can be partitioned into two parts $U,V$ such that $G[U]=K_{k-1}^-$, $G[V]$ is a connected graph with a cut-vertex and $G=G[U]\vee G[V]$;

$\bullet$ $G$ is neither a $k$-perfectly-connected graph nor a graph of $\mathcal{A}_{n,k}$.

\begin{lemma}\label{Ex-mc-lem2}
If $G\in \mathcal{B}^1_{n,k}\cup \mathcal{B}^2_{n,k}\cup \mathcal{B}^3_{n,k}$, then $mc(G)=m-n+k$.
\end{lemma}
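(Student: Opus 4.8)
The plan is to prove the two inequalities $mc(G)\le m-n+k$ and $mc(G)\ge m-n+k$ separately: the first by feeding the three families into the classification already established in Theorem \ref{Ex-mc-thm1}, and the second by exhibiting, for each family, an explicit MC-coloring whose wasted colors number exactly $n-k$. I will use throughout that $mc(G)=m-\min_\Gamma w(\Gamma)$, where for a coloring $\Gamma$ with nontrivial color-induced subgraphs $H$ one has $w(\Gamma)=\sum_H(e(H)-1)$; when each color class is a tree this equals $\sum_H(|H|-2)$. Thus producing a coloring with tree color classes and $w(\Gamma)=n-k$ certifies $mc(G)\ge m-n+k$.

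For the upper bound I would first verify that every $G$ in the three families is $k$-connected but not $(k+1)$-connected, so that Theorem \ref{Ex-mc-thm1} applies. For $\mathcal{B}^1_{n,k}$ this is immediate, since $u$ has degree $2+(k-2)=k$, whence $\delta(G)\le k$. For $\mathcal{B}^2_{n,k}$, a $2$-cut $\{a,b\}$ of the $2$-connected-but-not-$3$-connected graph $G[V]$ gives the $k$-set $U\cup\{a,b\}$, whose deletion disconnects $G$ (all $U$--$V$ edges vanish and $G[V]-\{a,b\}$ is already disconnected). For $\mathcal{B}^3_{n,k}$ the cut-vertex $c$ of $G[V]$ gives the $k$-set $U\cup\{c\}$ with the same effect. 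Hence $\kappa(G)=k$ in each case. Since the definitions of $\mathcal{B}^1_{n,k},\mathcal{B}^2_{n,k},\mathcal{B}^3_{n,k}$ explicitly exclude $G$ being $k$-perfectly-connected or lying in $\mathcal{A}_{n,k}$, Theorem \ref{Ex-mc-thm1} forbids $mc(G)=m-n+k+1$; together with the bound $mc(G)\le m-n+k+1$ from Theorem \ref{CY-2}, this forces $mc(G)\le m-n+k$.

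For the lower bound I would build one coloring per family. In $\mathcal{B}^1_{n,k}$, for each $i\in[k-1]$ color a spanning tree $T_i$ of the connected graph $G[U_i\cup u]$ with color $i$, and give every remaining edge a distinct trivial color; each inter-part pair is joined by a single complete-multipartite edge, while each intra-$U_i$ pair and each pair $\{u,x\}$ with $x\in U_i$ is joined inside $T_i$, so $\Gamma$ is an MC-coloring wasting $\sum_{i=1}^{k-1}(|U_i|+1-2)=(n-1)-(k-1)=n-k$ colors. In $\mathcal{B}^2_{n,k}$, color a spanning tree of $G[V]$ with one color and leave all other edges trivial; since $G=G[U]\vee G[V]$ with $G[U]=K_{k-2}$ complete, every pair not contained in $V$ is adjacent, and the waste is $|V|-2=(n-k+2)-2=n-k$. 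In $\mathcal{B}^3_{n,k}$, let $x,y$ be the two non-adjacent vertices of $G[U]=K_{k-1}^-$; color a spanning tree of $G[V]$ with one color, color the two edges $xv$ and $yv$ (for a fixed $v\in V$) with a second color, and make the rest trivial. The unique non-adjacent pair outside $V$, namely $\{x,y\}$, is joined by the monochromatic path through $v$, so $\Gamma$ is an MC-coloring, and the waste is $(|V|-2)+(2-1)=(n-k-1)+1=n-k$. In every case $mc(G)\ge m-n+k$, which with the upper bound gives equality.

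The waste bookkeeping is routine; the step that needs genuine care is checking that each constructed $\Gamma$ is truly monochromatic connecting, i.e.\ that the join / complete-multipartite structure really leaves every vertex pair not handled by a color-tree adjacent, hence joined by a trivial monochromatic edge. The precise hypotheses on $G[U]$ enter exactly here: $K_{k-2}$ being complete means no pair inside $U$ requires a tree in $\mathcal{B}^2_{n,k}$, whereas $K_{k-1}^-$ has a single missing edge $xy$, which is the unique obstruction in $\mathcal{B}^3_{n,k}$ and is repaired by precisely one extra wasted color. Getting this count right—together with confirming that the color classes are edge-disjoint trees, so the waste formula applies termwise—is what pins the total waste at exactly $n-k$ rather than more.
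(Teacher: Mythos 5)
Your proposal is correct and takes essentially the same approach as the paper: the upper bound comes from Theorem \ref{Ex-mc-thm1} together with the exclusion of $k$-perfectly-connected graphs and $\mathcal{A}_{n,k}$, and the lower bound uses the identical three explicit colorings (spanning trees of the $G[U_i\cup\{u\}]$, a spanning tree of $G[V]$, and a spanning tree of $G[V]$ plus a $2$-path through the nonadjacent pair of $K_{k-1}^-$), each wasting exactly $n-k$ colors. The only difference is that you spell out the connectivity verification and the monochromatic-connectedness checks that the paper dismisses as routine.
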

\begin{proof}
Let $G\in \mathcal{B}^1_{n,k}\cup \mathcal{B}^2_{n,k}\cup \mathcal{B}^3_{n,k}$.
It is easy to verify that $G$ is $k$-connected but not $(k+1)$-connected.
Since $G$ is neither a $k$-perfectly-connected graph nor a graph of $\mathcal{A}_{n,k}$,
we have $mc(G)<m-n+k+1$.

If $G\in \mathcal{B}^1_{n,k}$, then let $T_i$ be a spanning tree of $G[U_i\cup \{u\}]$ for $i\in[k-1]$.
We color the edges of $T_i$ with $i$ and color any other edges with trivial colors.
Then the edge-coloring is an MC-coloring of $G$, which uses $m-n+k$ colors.
Thus, $mc(G)= m-n+k$.

If $G\in \mathcal{B}^2_{n,k}$, then we color the edges of $G$ such that a spanning tree of $G[V]$ is
the unique nontrivial color-induced subgraph.
The edge-coloring is obviously an MC-coloring of $G$, which uses $m-n+k$ colors.
Thus, $mc(G)= m-n+k$.

If $G\in \mathcal{B}^3_{n,k}$, then let $T$ be a spanning tree of $G[V]$ and let $F$ be a $2$-path obtained by connecting one vertex of $G[V]$ and two nonadjacent vertices of $G[U]$.
We color the edges of $G$ such that $\{T,F\}$ is the set of nontrivial color-induced subgraphs.
The edge-coloring is obviously an MC-coloring of $G$, which uses $m-n+k$ colors.
Thus, $mc(G)= m-n+k$.
\end{proof}

\begin{theorem}\label{Ex-thm2}
Suppose $k\geq 3$, and $G$ is a $k$-connected but not $(k+1)$-connected graph. Then $mc(G)=m-n+k$ if and only if $G\in \mathcal{B}^1_{n,k}\cup \mathcal{B}^2_{n,k}\cup \mathcal{B}^3_{n,k}$.
\end{theorem}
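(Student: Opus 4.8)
The ``if'' direction is exactly Lemma~\ref{Ex-mc-lem2}, so it remains to prove the ``only if'' direction, and the plan is to re-run the proof of Theorem~\ref{Ex-mc-thm1} while tracking one extra wasted color. Fix a simple extremal MC-coloring $\Gamma$ with nontrivial-tree set $\mathcal S$; since each color-induced subgraph is a tree, $mc(G)=m-\sum_{T\in\mathcal S}(|T|-2)$, so the hypothesis $mc(G)=m-n+k$ says that $\Gamma$ wastes exactly $\sum_{T\in\mathcal S}(|T|-2)=n-k$ colors, one more than in Theorem~\ref{Ex-mc-thm1}. As $G$ is $k$-connected but not $(k+1)$-connected, fix a vertex-cut $S=\{w_1,\dots,w_k\}$ with components $A_1,\dots,A_t$ of $G-S$, and split into the same two cases as in Theorem~\ref{Ex-mc-thm1}, according to whether some component is covered by the nontrivial trees through one of its vertices.

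In Case 1 there are $A_1$ and $u\in V(A_1)$ with $V(A_1)\subseteq V(\mathcal T_u)$. Writing $\mathcal T_u=\{T_1,\dots,T_r\}$, simplicity forces the $T_i$ to meet pairwise only in $u$, so $T=\bigcup_i T_i$ is a tree on $(n-k)+s$ vertices, where $s=|V(T)\cap S|$ and the nonempty sets $V(T_i)\cap S$ are pairwise disjoint, giving $s\ge r$. A direct count shows $\mathcal T_u$ alone wastes $(n-k)+s-r-1\ge n-k-1$, so the budget of $n-k$ wasted colors leaves exactly one extra unit, realized either as (1a) $s=r+1$ with $\mathcal S=\mathcal T_u$, or as (1b) $s=r$ with $\mathcal S=\mathcal T_u\cup\{F\}$ for a single $2$-path $F$. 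I would first observe that any vertex of $S$ lying in no nontrivial tree has all incident edges trivially colored, hence must be a universal vertex of $G$. Next, a simplicity/pigeonhole argument (using in (1b) that $|F|=3$) shows that $|A_1|\ge 2$ forces $r=1$; equivalently $r\ge 2$ forces $A_1=\{u\}$. The two regimes then yield the three families: if $r\ge 2$ then $A_1=\{u\}$, $N(u)=S$, $\delta(G)=k$, and the trees through $u$ exhibit the complete $(k-1)$-partite structure with $u$ carrying one extra neighbor, i.e.\ $G\in\mathcal B^1_{n,k}$; if $r=1$ and $s=2$ then the single tree $T_1$ meets $S$ in two vertices while the remaining $k-2$ cut-vertices are universal, giving $G=K_{k-2}\vee G'$ with $G'$ $2$-connected but not $3$-connected, i.e.\ $G\in\mathcal B^2_{n,k}$; and if $r=1$ and $s=1$ then the extra $2$-path $F$ joins the two non-adjacent cut-vertices through $V(G')$, giving $G=K_{k-1}^-\vee G'$ with $G'$ connected and having a cut-vertex, i.e.\ $G\in\mathcal B^3_{n,k}$. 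The hypothesis $k\ge 3$ is used precisely to make $K_{k-2}$ and $K_{k-1}^-$ meaningful.

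In Case 2, for every component $A_i$ and every $u\in V(A_i)$ one has $V(A_i)\setminus V(\mathcal T_u)\ne\emptyset$; the goal is to show this case is vacuous. As in Theorem~\ref{Ex-mc-thm1} I would fix $u\in A_1$, set $A=V(A_1)\setminus V(\mathcal T_u)\ne\emptyset$, pick $v\in A$ and $w\in A_2$, and let $\mathcal F$ be the nontrivial trees joining $w$ to a vertex of $A$; one checks $\mathcal T_u\cap\mathcal F=\emptyset$ and that together they waste at least $(n-k-|A|-1)+|A|=n-k-1$. Now the single extra unit of slack means $\mathcal S$ contains at most one further nontrivial tree beyond $\mathcal T_u\cup\mathcal F$, and it must be a $2$-path. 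Taking $w'\in V(A_2)\setminus V(\mathcal T_w)$, which exists by the Case~2 hypothesis, the vertices $v\in A_1$ and $w'\in A_2$ lie in different components of $G-S$, hence are non-adjacent, and no tree of $\mathcal T_u\cup\mathcal F$ contains both; a single permissible $2$-path cannot simultaneously carry this pair and all the other forced connections, and a short accounting yields a contradiction.

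The main obstacle is tightening Case 2: the one extra wasted color gives exactly enough slack that the clean contradiction of Theorem~\ref{Ex-mc-thm1} no longer applies verbatim, so the hardest step is to verify rigorously that the single allowed extra $2$-path cannot repair the missing monochromatic connection between the two separated vertices. A secondary subtlety, already visible above, is the bookkeeping of universal cut-vertices in Case 1: one must pin down $r$ exactly (so that the $r\ge 2$ regime really produces the complete $(k-1)$-partite graph of $\mathcal B^1_{n,k}$, with singleton parts contributing only trivial edges) and confirm, via $\kappa(G)=k$, that the residual graph $G'$ has precisely the claimed connectivity in the $\mathcal B^2_{n,k}$ and $\mathcal B^3_{n,k}$ outcomes.
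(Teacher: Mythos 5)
Your overall frame is the paper's: the same two cases as in Theorem~\ref{Ex-mc-thm1}, the waste budget of $n-k$ colors, and the dichotomy $s=r+1$ with $\mathcal{S}=\mathcal{T}_u$ versus $s=r$ with one extra $2$-path. However, the two steps you defer to ``a simplicity/pigeonhole argument'' and ``a short accounting'' are precisely where the real work lies, and in both places the argument you propose does not go through.

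The serious gap is Case 2. Your claim that ``a single permissible $2$-path cannot simultaneously carry this pair and all the other forced connections'' is simply false: the accounting constraints are all satisfiable. Pushing your own reduction to the end forces $r=2$, $V(A_1)=\{x_1,x_2\}$, $V(A_2)=\{y_1,y_2\}$, and four pairwise distinct nontrivial trees $T_{i,j}$, each a $2$-path joining $x_i$ to $y_j$ through a vertex of $S$; with $n=k+4$ these waste exactly $4=n-k$ colors, every separated pair is covered (your pair $(v,w')=(x_2,y_2)$ is carried exactly by the single extra $2$-path), and simplicity holds. So no contradiction can be extracted from counting wasted colors: this configuration is a legitimate MC-coloring wasting $n-k$ colors, and what must be shown is that it is not \emph{extremal}. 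The paper does this by a recoloring: it first notes that the forced trivial edges make every vertex of $S$ adjacent to all of $x_1,x_2,y_1,y_2$ (a vertex of $S$ not adjacent to $x_i$ would have to lie on $T_{i,1}$ or $T_{i,2}$, hence be its middle vertex, hence be adjacent to $x_i$ after all), and then takes the $4$-edge star at a vertex $v_1\in S$ as the unique nontrivial tree, obtaining an MC-coloring that wastes only $3<n-k$ colors, contradicting extremality of $\Gamma$. Without this recoloring step (or an equivalent one, e.g.\ showing the forced structure puts $G\in\mathcal{A}_{n,k}$ and invoking Theorem~\ref{Ex-mc-thm1}), Case 2 is not eliminated.

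There is a second, related gap in Case 1. The assertion that ``$|A_1|\ge 2$ forces $r=1$'' does follow from simplicity in regime (1a), where $\mathcal{S}=\mathcal{T}_u$ and the argument of Theorem~\ref{Ex-mc-thm1} applies verbatim; but in regime (1b) the pigeonhole does not close. It leaves exactly one surviving configuration: $|A_1|=2$ with $V(A_1)=\{u,z\}$, $r=2$, $T_1\ni u,z$ covering $B$ except for one vertex $w$, $T_2$ a $2$-path with edges $us_2$ and $s_2w$, and $F$ a $2$-path with edges $zx$ and $xw$; here $F$ carries the unique cross pair $(z,w)$, all waste counts are tight, and nothing is violated. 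This is the paper's Case 1.3, and it is handled only by a further recoloring: merge $T'=T_1\cup F$ into a single nontrivial tree and trivialize $T_2$ (legitimate because the middle vertex $s_2$ is forced to be universal), which produces another extremal coloring with a unique nontrivial tree meeting $S$ in two vertices, i.e.\ the already-settled $r=1$, $s=2$ case, and yields $G\in\mathcal{B}^2_{n,k}$. Your trichotomy ($r\ge 2\Rightarrow\mathcal{B}^1_{n,k}$; $r=1,s=2\Rightarrow\mathcal{B}^2_{n,k}$; $r=1,s=1\Rightarrow\mathcal{B}^3_{n,k}$) therefore omits a case; it also misassigns part of the $r=1$, $s=1$ regime, since when a leaf of the $2$-path $F$ lies on $T_1$ the paper's recoloring shows the outcome is $\mathcal{B}^2_{n,k}$, not $\mathcal{B}^3_{n,k}$. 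The common lesson in both gaps is that tight waste-accounting alone cannot finish this theorem: the extremal hypothesis must be exploited by exhibiting strictly better colorings.
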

\begin{proof}
If $G\in \mathcal{B}^1_{n,k}\cup \mathcal{B}^2_{n,k}\cup \mathcal{B}^3_{n,k}$, then by Lemma \ref{Ex-mc-lem2}, $mc(G)=m-n+k$.

Suppose $mc(G)=m-n+k$.
Next we prove $G\in \mathcal{B}^1_{n,k}\cup \mathcal{B}^2_{n,k}\cup \mathcal{B}^3_{n,k}$.
Let $S=\{v_1,\cdots,v_k\}$ be a vertex-cut of $G$ and $G-S$ have $r$ components $A_1,\cdots,A_r$.
Let $\Gamma$ be an extremal MC-coloring of $G$ and $u\in V(A_i)$.
Then $\Gamma$ wastes $n-k$ colors.
Since $\Gamma$ is simple, any two trees of $\mathcal{T}_u$ intersect only at $u$.
Thus, $\mathcal{T}_u$ wastes at least \begin{align}&|\bigcup_{l\neq i}A_l|+|V(\mathcal{T}_u)\cap V(A_i)|+|V(\mathcal{T}_u)\cap S|-1-|\mathcal{T}_u|\\
&=n-k-|V(A_i)-V(\mathcal{T}_u)|+(|V(\mathcal{T}_u)\cap S|- |\mathcal{T}_u|)-1
\end{align} colors.
\begin{claim}\label{Ex-clm0}
Suppose $U\subseteq V(A_1)$.
Then $\bigcup_{w\in U}\mathcal{T}_w$ wastes at least $|U|+|\bigcup_{l=2}^rA_l|-1$ colors.
\end{claim}
\begin{proof}
Let $U=\{a_1,\cdots,a_q\}$ and let $\mathcal{F}_i=\mathcal{T}_{a_i}-\bigcup_{l=1}^{i-1}\mathcal{T}_{a_l}$.
Suppose $\mathcal{F}_i$ contains $c_i$ vertices of $U$. Then $\sum_{i\in[q]}c_i\geq q=|U|$.
Since each tree of $\mathcal{F}_i$ connects one vertex of $S$ and one vertex of $\bigcup_{l=2}^rA_l$,
$\mathcal{F}_i$ wastes at least $c_i$ colors if $c_i\neq 0$.
Since $\mathcal{F}_i=\mathcal{T}_{a_1}$ wastes at least $|\bigcup_{l=2}^rA_l|+c_1-1$ colors by equality (1),
$\bigcup_{w\in U}\mathcal{T}_w$ wastes at least
\begin{align*}\sum_{i\in[q]}w_i&\geq |\bigcup_{l=2}^rA_l|+c_1-1+\Sigma_{i=2}^qc_i
=|\bigcup_{l=2}^rA_l|-1+\sum_{i\in[q]}c_i\geq
|\bigcup_{l=2}^rA_l|+|U|-1
\end{align*} colors.
\end{proof}

\begin{claim}\label{Ex-clm00}
If $T$ is a $2$-path of $G$, then the two leaves of $T$ are nonadjacent.
\end{claim}
\begin{proof}
Suppose the two leaves of $T$ are adjacent.
Then recolor every edge of $T$ by a trivial color.
It is easy to verify that the new coloring is an MC-coloring of $G$. However, the new coloring wastes less colors,
a contradiction to the assumption that $\Gamma$ is extremal.
\end{proof}

{\bf Case 1.} There is a component, say $A_1$, and a vertex $u$ of $A_1$ such that $A_1\subseteq V(\mathcal{T}_u)$.

Let $\mathcal{T}_u=\{T_1,\cdots,T_t\}$ and $B=\bigcup_{l=2}^rV(A_l)$.
Here $T_i$ is a tree colored $i$.
Each $T_i$ contains at least one vertex of $S$.

{\bf Case 1.1.} $V(A_1)=\{u\}$.

Since $S$ is a vertex-cut of order $k$ and $G$ is a $k$-connected graph,
$u$ connects every vertex of $S$, i.e., $S=N(u)$.

If there is a tree of $\mathcal{T}_u$, say $T_t$, which contains at least two vertices of $S$,
then by (2), $\mathcal{T}_u$ wastes at least $n-k$ colors.
Since $mc(G)=m-n+k$, $\mathcal{T}_u$ wastes precisely $n-k$ colors.
Thus, $T_t$ contains precisely two vertices of $S$ (say $v_t,v_{t+1}$), and $T_l$ contains precisely one vertex
of $S$ for $l\in[t-1]$ (say $v_l$). Therefore, $\mathcal{T}_u$ is the set of all nontrivial trees of $G$.
Since $\Gamma$ is simple, any two trees of $\mathcal{T}_u$ share a common vertex $u$.
Let $U_i=V(T_i)-\{u\}$ for $i\in [t]$ and $U_i=\{v_{i+1}\}$ for $t+1\leq i\leq k-1$.
Then $\{u,U_1,\cdots,U_{k-1}\}$ is a partition of $V(G)$ and each $G[U_i\cup \{u\}]$ is connected.
Moreover, $|U_i\cap N(u)|=1$ for $i\neq t$ and $|U_t\cap N(u)|=2$.
Since there is no nontrivial tree connecting a vertex of $U_i$ and a vertex of $U_j$ if $i\neq j$,
$U_1,\cdots,U_{k-1}$ form a complete multipartite graph.
Since $mc(G)\neq m-n+k+1$, by Theorem \ref{Ex-mc-thm1}, $G$ is neither a $k$-perfectly-connected graph nor a graph of $\mathcal{A}_{n,k}$.
Thus, $G\in \mathcal{B}^1_{n,k}$.

If every tree of $\mathcal{T}_u$ contains precisely one vertex of $S$, say $V(T_i)\cap S=\{v_i\}$ for $i\in[t]$.
Then $\mathcal{T}_u$ wastes $n-k-1$ colors.
Thus, there is a nontrivial tree $T$ that wastes one color, i.e., $T$ is a $2$-path.
So, $\mathcal{T}_u\cup \{T\}$ is the set of all nontrivial trees of $G$.
Since $T$ is a $2$-path, by Claim \ref{Ex-clm00}, the two leaves of $T$ are nonadjacent.
Let $U_i=V(T_i)-\{u\}$ for $i\in[t]$ and $U_i=\{v_i\}$ for $t+1\leq i\leq k$.
Since $\Gamma$ is simple, the two leaves of $T$ can not appear in the same set $U_i$.
Thus, there are two different integers $i,j$ of $[k]$ such that one leaf of $T$ is in $U_i$ and the other leaf is in $U_j$.
Then $U_1,\cdots,U_i\cup U_j,\cdots,U_k$ form a complete $(k-1)$-partite graph.
Since $mc(G)\neq m-n+k+1$, by Theorem \ref{Ex-mc-thm1}, $G$ is neither a $k$-perfectly-connected graph nor a graph of $\mathcal{A}_{n,k}$.
Recalling the definition of $\mathcal{B}^1_{n,k}$, we get $G\in \mathcal{B}^1_{n,k}$.

{\bf Case 1.2.} $t=1$.

From the assumption, $\bigcup_{i\in[r]}V(A_i)\subseteq V(T_1)$.
Then $T_1$ wastes $n-k+|V(T_1)\cap S|-2$ colors.
Since $\Gamma$ wastes $n-k$ colors, either $T_1$ is the only nontrivial tree and $|V(T_1)\cap S|=2$, or $|V(T_1)\cap S|=1$ and there is a $2$-path $F$ such that $\{F,T_1\}$ is the set of all nontrivial trees.
Let $V=V(T_1)$ and $U=V(G)-V$.

If $|V(T_1)\cap S|=2$,
then since $T_1$ is the unique nontrivial tree of $\Gamma$, we have $G[U]=K_{k-2}$ and $G=G[U]\vee G[V]$.
Since $S$ is a vertex-cut with $|S|=k$, $V(T_1)\cap S$ is a vertex-cut of $G-U$, i.e., $G[V]$ is a $2$-connected but not $3$-connected graph.
Since $G$ is neither a $k$-perfectly-connected graph nor a graph of $\mathcal{A}_{n,k}$, we have $G\in\mathcal{B}^2_{n,k}$.

If $|V(T_1)\cap S|=1$, then suppose $F=x_1e_1ye_2x_2$ and $V(T_1)\cap S=\{w\}$.
If, by symmetry, $x_1\in V(T_1)$, then $V(F)\cap V(T_1)=\{x_1\}$.
Let $w'\in V(T_1)-\{x_1\}$.
Then $w'x_2$ is a trivial edge of $G$.
Let $T=T_1\cup w'x_2$ and let $\Gamma'$ be an edge-coloring of $G$ such that $T$ is the only nontrivial tree of $G$.
Then $\Gamma'$ is an extremal MC-coloring of $G$ with $|V(T)\cap S|=2$, this case has been discussed above.
If $\{x_1,x_2\}\cap V(T_1)=\emptyset$, then $G[U]=K_{k-1}^-$ and $G=G[U]\vee G[V]$.
Moreover, $G[V]$ is a connected graph with a vertex-cut $w$.
Thus, $G\in\mathcal{B}^3_{n,k}$.

{\bf Case 1.3.} $|A_1|\geq 2$ and $t\geq 2$.

If $|A_1|\geq 3$, then there are two trees of $\mathcal{T}_u$, say $T_1,T_2$, such that either $|V(T_1)\cap V(A_1)|\geq 3$ or $|V(T_1)\cap V(A_1)|=|V(T_2)\cap V(A_1)|=2$.
Let $w_i\in V(T_i)\cap B$ for $i\in[2]$.
If $|V(T_1)\cap V(A_1)|\geq 3$, then there are trees of $\mathcal{T}_{w_2}-\mathcal{T}_u$ connecting $w_2$ and $V(T_1)\cap V(A_1)-\{u\}$.
It is obvious that $\mathcal{T}_{w_2}-\mathcal{T}_u$ wastes at least two colors.
Since $\mathcal{T}_u$ wastes at least $n-k-1$ colors,
$\mathcal{T}_{w_2}\cup \mathcal{T}_u$ wastes at least $n-k-1+2=n-k+1$ colors, which contradicts that $\Gamma$ is an extremal MC-coloring of $G$.
If $|V(T_1)\cap V(A_1)|=|V(T_2)\cap V(A_1)|=2$, say $\{z_i\}=V(T_i)\cap V(A_1)-\{u\}$ for $i\in[2]$.
Then there is a nontrivial tree $F_1$ connecting $w_1,z_2$, and a nontrivial tree $F_2$ connecting $w_2,z_1$.
Since $\Gamma$ is simple, we have $F_1\neq F_2$.
Since $\{F_1,F_2\}\cap \mathcal{T}_u=\emptyset$, $\{F_1,F_2\}\cup \mathcal{T}_u$ wastes at least $n-k+1$ colors, a contradiction.
Therefore, $|A_1|=2$.
Let $V(A_1)=\{z,u\}$ and let $T_1$ contain $z,u$.
Then $V(T_i)\cap S=\{u\}$ for $i\geq 2$.

Since $t\geq 2$, we have $B-V(T_1)\neq \emptyset$.
Then $z$ connects every vertex of $B-V(T_1)$ by a nontrivial tree, $\mathcal{T}_z-\mathcal{T}_u$ is not an empty set.
It is obvious that $\mathcal{T}_u$ wastes at least $n-k-1$ colors and $\mathcal{T}_z-\mathcal{T}_u$ wastes at least one color.
Since $mc(G)=m-n+k$, $\mathcal{T}_u$ wastes precisely $n-k-1$ colors and $\mathcal{T}_z-\mathcal{T}_u$ wastes precisely one color.
Therefore, $\mathcal{T}_z-\mathcal{T}_u$ just has one member, and the member is a $2$-path (call the $2$-path $F$, then $\mathcal{T}_z-\mathcal{T}_u=\{F\}$).
So, $|B-V(T_1)|=1$ and $t=2$.
Then $\mathcal{T}_u=\{T_1,T_2\}$ and $\mathcal{S}=\{F,T_1,T_2\}$ is the set of all nontrivial trees.
We can also get that each tree of $\mathcal{S}$ intersects $S$ at only one vertex.
So, $F$ and $T_2$ are $2$-paths.

Let $\Gamma'$ be an edge-coloring of $G$ obtained from $\Gamma$ by recoloring $T'=T_1\cup F$ with $1$ and recoloring any other edges with trivial colors.
Then the new coloring is also an MC-coloring of $G$.
Since $\Gamma'$ wastes $n-k$ colors, $\Gamma'$ is an extremal MC-coloring of $G$.
Then $T'$ is the unique nontrivial tree of $\Gamma'$ and $|V(T')\cap S|=2$, this case has been discussed in Case 1.2.

{\bf Case 2.} For each $i\in[r]$ and each $u\in A_i$, $V(A_i)-V(\mathcal{T}_u)\neq \emptyset$
(then each $A_l$ has order at least two).

If there is an integer $i\in[r]$ such that $|\bigcup_{l\neq i}A_l|\geq 3$, then let $u\in V(A_i)$ and let $A'=V(A_i)-V(\mathcal{T}_u)$.
Then $\mathcal{T}_u$ wastes at least $n-|A'|-k-1$ colors.
By Claim \ref{Ex-clm0}, $\bigcup_{w\in V(A')}\mathcal{T}_w$ wastes at least $|A'|+|\bigcup_{l\neq i}A_l|-1$ colors.
Since $(\bigcup_{w\in V(A')}\mathcal{T}_w)\cap \mathcal{T}_u=\emptyset$,
$\mathcal{T}_u \cup(\bigcup_{w\in V(A')}\mathcal{T}_w)$ wastes at least $n-k+1$ colors, a contradiction.
Therefore, $|\bigcup_{l\neq i}A_l|\geq 3$ for each $i\in[r]$, i.e., $|A_i|=2$ for $i\in[r]$ and $r=2$.
Let $V(A_1)=\{x_1,x_2\}$ and $V(A_2)=\{y_1,y_2\}$.
Then each nontrivial tree contains at most two of $\{x_1,x_2,y_1,y_2\}$.
Therefore, there is a nontrivial tree $T_{i,j}$ connecting $x_i,y_j$ for $i,j\in[2]$,
and the four nontrivial trees are pairwise different.
Since $n=k+4$ in this case and $\Gamma$ wastes $n-k=4$ colors, each $T_{i,j}$ is a $2$-path and there is no other nontrivial tree.
By Claim \ref{Ex-clm00}, the two leaves of each $T_{i,j}$ are nonadjacent.
Thus, $\overline{G}=\{x_1y_1,x_1y_2,x_2y_1,x_2y_2\}$ is a $4$-cycle.
Choose a vertex of $S$, say $v_1$.
Let $T=\bigcup_{i\in[2]}(v_1x_i\cup v_1y_i)$.
Then $T$ is a tree of $G$.
Let $\Gamma'$ be an edge-coloring of $G$ such that $T$ is the only nontrivial tree.
Then $\Gamma'$ is an MC-coloring of $G$ and it wastes three colors,
which contradicts that $\Gamma$ is an extremal MC-coloring of $G$.
\end{proof}

\section{Classification of planar graphs}

In this section, we consider the monochromatic connection numbers of all planar graphs.
Since the connectivity of a planar graph is at most five,
its monochromatic connection number is less than or equal to $m-n+6$.
In fact, $ m-n+2\leq mc(G)\leq m-n+4$ if $G$ is a planar graph.
We depict all $k$-connected but not $(k+1)$-connected planar graphs $G$ with $mc(G)=m-n+r$,
where $1\leq k\leq 5$ and $2\leq r\leq 4$.

It is well-known that a graph is outerplanar if and only if it does not contain a $K_4$-minor or a $K_{2,3}$-minor,
and a $2$-connected outerplanar graph contains a $2$-degree vertex.
Moreover, the exterior face of an outerplanar graph $G$ is a Hamiltonian cycle (called the {\em boundary} of $G$).

\begin{lemma}\label{Pla-vee}
Let $H$ be a simple graph and $v$ an additional vertex. Then
\begin{enumerate}
\item $v\vee H$ is a planar graph if and only if $H$ is an outerplanar graph.
\item $2K_1\vee H$ is a planar graph if and only if $H$ is either a cycle or linear forest.
\item $K_2\vee H$ is a planar graph if and only if $H$ is a linear forest.
\item if $H$ is a $2$-connected outerplanar graph with $|H|\geq 4$, then $H$ contains two nonadjacent $2$-degree vertices.
\end{enumerate}
\end{lemma}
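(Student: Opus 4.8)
The plan is to treat the four items separately, using Kuratowski's theorem (excluded $K_5$ and $K_{3,3}$ subdivisions) together with the fact that a $3$-connected planar graph has an essentially unique embedding. For item (1), I would run the standard cone argument. For the ``if'' direction, take an outerplanar embedding of $H$ with all vertices on the outer face; placing $v$ in the outer region and joining it to every vertex of $H$ yields a planar embedding of $v\vee H$. For ``only if'', fix a planar embedding of $v\vee H$ and delete $v$ with its incident edges: the region vacated by $v$ becomes a face of the resulting embedding of $H$ whose boundary contains every vertex of $H$ (since $v$ was adjacent to all of them), so $H$ is outerplanar.

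For items (2) and (3) I would first reduce to a degree condition. Since $2K_1\vee H\supseteq v\vee H$ and $K_2\vee H\supseteq 2K_1\vee H$, planarity is inherited by subgraphs, so such $H$ is outerplanar by (1). Next, if $H$ had a vertex $u$ of degree at least $3$ with neighbours $p,q,r$, then the two apex vertices $a,b$ together with $u$ would give a $K_{3,3}$ with parts $\{a,b,u\}$ and $\{p,q,r\}$; hence $H$ has maximum degree at most $2$ and is a disjoint union of paths and cycles. To finish (2) I would show a cycle cannot coexist with any further vertex: $2K_1\vee C_n$ is a bipyramid, a maximal planar ($3$-connected) graph whose unique embedding has no face incident to both poles $a,b$, so no extra vertex adjacent to both $a$ and $b$ (as every vertex of $H$ is) can be inserted; thus $2K_1\vee(C_n+K_1)$ is nonplanar, and by subgraph-monotonicity any $H$ containing a cycle plus another vertex is excluded. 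This leaves exactly the single cycle and the linear forests. For the converse, the bipyramid settles the cycle case, and for a linear forest one places its vertices on a horizontal line (path-edges as segments), with $a$ above and $b$ below, joining each apex to all vertices without crossings. For (3), note $K_2\vee H=(2K_1\vee H)+ab$, so (2) already forces $H$ to be a cycle or a linear forest; the same ``no common face for the poles'' argument shows $ab$ cannot be added when $H$ is a cycle, while in the linear-forest embedding both $a$ and $b$ lie on the outer face, so $ab$ can be routed around the outside. Hence $K_2\vee H$ is planar iff $H$ is a linear forest.

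For item (4) I would reduce to maximal outerplanar graphs. Triangulate $H$ by adding non-crossing chords inside its Hamiltonian boundary to obtain a maximal outerplanar graph $H'$ on the same vertex set; this only raises degrees, so every degree-$2$ vertex of $H'$ is already a degree-$2$ vertex of $H$, and nonadjacency in $H'$ persists in $H\subseteq H'$. In $H'$ every bounded face is a triangle and the weak dual is a tree with $|H|-2\geq 2$ nodes; its at least two leaves are ``ear'' triangles, each contributing a degree-$2$ vertex, so $H'$ has at least two degree-$2$ vertices. Finally, if two degree-$2$ vertices $u,v$ were adjacent, then (having no chords) they would be consecutive on the boundary, the unique triangle on the boundary edge $uv$ would force their other boundary-neighbours to coincide, and the whole boundary would be a triangle, contradicting $|H|\geq 4$; hence the two ears are nonadjacent, and they serve as the required vertices of $H$.

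I expect the delicate point to be the forward direction of (2), namely ruling out a cycle together with any additional vertex. The cleanest route is the unique-embedding argument for the maximal planar bipyramid $2K_1\vee C_n$, but one must justify its $3$-connectivity (so that Whitney's theorem applies) and verify that no face of its embedding is incident to both apexes; the alternative of exhibiting an explicit $K_5$ or $K_{3,3}$ subdivision inside $2K_1\vee(C_n+K_1)$ is more tedious. The remaining parts are routine once the correct embeddings and the ear/weak-dual structure are in place.
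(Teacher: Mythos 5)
Your proof is correct, but it takes a genuinely different route from the paper's in three of the four items. For (1) the paper does not touch embeddings at all: it invokes the minor characterization of outerplanarity (no $K_4$-minor and no $K_{2,3}$-minor) and observes that such a minor in $H$ lifts to a $K_5$- or $K_{3,3}$-minor in $v\vee H$; your cone/face-deletion argument is the classical topological proof of the same equivalence. For the degree bound in (2) you and the paper do the same thing: your $K_{3,3}$ on $\{a,b,u\}$ and $\{p,q,r\}$ is exactly the paper's remark that $2K_1\vee S_3$ contains $K_{3,3}$. Where you diverge is the exclusion of a cycle together with a further vertex: the paper notes in one line that $2K_1\vee(K_3+K_1)$ has a $K_5$-minor (contract the isolated vertex into one apex, making the two apexes adjacent, and contract the cycle to a triangle), and that a cycle component plus anything else yields a $(K_3+K_1)$-minor of $H$; you instead argue via the bipyramid's face structure and Whitney's theorem, which works but is heavier than necessary. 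In fact your appeal to unique embeddability can be dropped: $2K_1\vee C_n$ has $3(n+2)-6$ edges, so in \emph{every} planar embedding all faces are triangles, and a triangular face incident to both apexes would force the edge between them, which is absent; this gives your ``no common face for the poles'' claim directly. The same comment applies to (3), where the paper again just uses a $K_5$-minor when $H$ has a cycle component. For (4) the paper inducts on chords: a chord $xy$ splits $H$ into two $2$-connected outerplanar graphs $H_1,H_2$, each of which (by induction, or trivially if it equals $K_3$) has a $2$-degree vertex avoiding $\{x,y\}$, and the two resulting vertices are nonadjacent; your triangulation-plus-weak-dual-ears argument reaches the same conclusion through the standard theory of maximal outerplanar graphs, with the nice extra observation that in a maximal outerplanar graph on at least four vertices no two degree-$2$ vertices are adjacent. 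Overall, the paper's proofs are shorter and purely minor-theoretic, hence self-contained; yours are more geometric and lean on standard but heavier facts (face merging under vertex deletion, Euler-formula face counting or Whitney, the ear structure of maximal outerplanar graphs), and all the gaps you flag are routine to fill.
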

\begin{proof}
Notice that $v\vee H$ is a planar graph if $H$ is an outerplanar graph.
On the other hand, if $v\vee H$ is a planar graph but $H$ is not an outerplanar graph, then $H$ contains either a $K_4$-minor or a $K_{2,3}$-minor.
Therefore, $v\vee H$ contains either a $K_5$-minor or a $K_{3,3}$-minor, a contradiction.

It is obvious that $2K_1\vee S_3$ contains a $K_{3,3}$ as its subgraph, and $2K_1\vee (K_3+K_1)$ contains a $K_5$-minor.
Therefore, $H$ does not have vertices of degree greater than or equal to three when $2K_1\vee H$ is a planar graph,
i.e., each component of $H$ is either a cycle or a path.
If $H$ has two components $H_1,H_2$ such that $H_1$ is a cycle, then $H$ has a $(K_3+K_1)$-minor.
Thus, $2K_1\vee H$ has a $K_5$-minor, a contradiction.
Therefore, $H$ is either a cycle or a linear forest if $2K_1\vee H$ is a planar graph.
On the other hand, if  each component of $H$ is either a cycle or a linear forest, then $2K_1\vee H$ is clearly a planar graph.

If $H$ is a linear forest, then $K_2\vee H$ is obviously a planar graph.
If $K_2\vee H$ is a planar graph, then $H$ is either a cycle or a linear forest, since $2K_1\vee H$ is a subgraph of $K_2\vee H$.
Since $K_2\vee H$ contains a $K_5$-minor if one component of $H$ is a cycle, $H$ is a linear forest.

If $H$ is a $2$-connected outerplanar graph with $|H|=4$, then $H$ has two nonadjacent $2$-degree vertices.
If $|H|\geq 5$ and $H$ does not have chord, then $H$ has two nonadjacent $2$-degree vertices.
If $|H|\geq 5$ and $H$ has a chord $e=xy$, then the two $\{x,y\}$-components, say $H_1$ and $H_2$, are $2$-connected outerplanar graphs.
For $i\in[2]$, if $|H_i|\geq 4$, then by induction, $H_i$ has a vertex $z_i\notin\{x,y\}$ such that $d_{H_i}(z_i)=2$;
if $H_i=K_3$, let $\{z_i\}=V(H_i)-\{x,y\}$.
Then $z_1,z_2$ are two nonadjacent $2$-degree vertices of $H$.
\end{proof}

Let $\mathcal{P}_1$ denote the set of graphs $G=v\vee H$, where $H$ is a connected outerplanar graph with a cut-vertex.

\begin{lemma}
If $G$ is a $2$-connected but not $3$-connected planar graph, then $mc(G)=m-n+3$ if and only if $G \in\mathcal{P}_1$.
\end{lemma}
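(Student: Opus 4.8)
The plan is to read this as the planar specialization of Theorem \ref{Ex-mc-thm1} in the case $k=2$. Since $G$ is $2$-connected but not $3$-connected, Theorem \ref{Ex-mc-thm1} (with $k=2$) says that $mc(G)=m-n+3$ if and only if $G\in\mathcal{A}_{n,2}$ or $G$ is a $2$-perfectly-connected graph. Because $K_{k-1}=K_1$ is a single vertex, $\mathcal{A}_{n,2}$ is exactly the family of graphs $v\vee H$ with $H$ connected and having a cut-vertex. Thus the whole task reduces to matching these two families against $\mathcal{P}_1$ under the planarity hypothesis, and the only genuinely new input beyond Theorem \ref{Ex-mc-thm1} is Lemma \ref{Pla-vee}.

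For the direction $G\in\mathcal{P}_1\Rightarrow mc(G)=m-n+3$, I would write $G=v\vee H$ with $H$ a connected outerplanar graph having a cut-vertex. Then $G\in\mathcal{A}_{n,2}$ immediately, so Theorem \ref{Ex-mc-thm1} yields $mc(G)=m-n+3$. For consistency I would also note that such a $v\vee H$ really is $2$-connected but not $3$-connected: deleting one vertex leaves a connected graph since $v$ dominates $H$, whereas $v$ together with a cut-vertex of $H$ is a $2$-cut.

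For the forward direction, assume $mc(G)=m-n+3$ and split according to Theorem \ref{Ex-mc-thm1}. If $G\in\mathcal{A}_{n,2}$, then $G=v\vee H$ with $H$ connected and possessing a cut-vertex; since $G$ is planar, Lemma \ref{Pla-vee}(1) forces $H$ to be outerplanar, whence $G\in\mathcal{P}_1$. The more delicate case is when $G$ is $2$-perfectly-connected, with partition $\{v\},V_1,V_2$ where $V_1,V_2$ induce a complete bipartite graph and $v$ has exactly one neighbour in each $V_i$. Here I would invoke planarity through the $K_{3,3}$ obstruction: since $K_{|V_1|,|V_2|}\subseteq G$ is planar, one side, say $V_2$, satisfies $|V_2|\le 2$. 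Let $w$ be the unique neighbour of $v$ in $V_2$. Then $w$ is adjacent to all of $V_1$ (complete bipartiteness), to the remaining vertex of $V_2$ if any (connectedness of $G[V_2]$), and to $v$; hence $w$ is a dominating vertex and $G=w\vee(G-w)$. In $G-w$ the vertex $v$ is left adjacent only to its unique $V_1$-neighbour $a$, so $v$ is pendant and $a$ is a cut-vertex of $G-w$, while $G-w$ is outerplanar by Lemma \ref{Pla-vee}(1). Therefore $G\in\mathcal{P}_1$.

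I expect the $2$-perfectly-connected case to be the main obstacle: everything there hinges on extracting a dominating vertex from the complete bipartite block via the $K_{3,3}$ argument and then checking that the residual graph is simultaneously outerplanar and equipped with the required cut-vertex. The one degenerate configuration is $|V_1|=|V_2|=1$, i.e.\ $G=K_3$, where $G-w$ is a single edge with no cut-vertex; this is the sole place where the two descriptions can formally differ, and I would handle it by the small-order convention (a graph with a cut-vertex has at least three vertices, so $\mathcal{P}_1$ graphs satisfy $n\ge 4$), treating $K_3$ separately.
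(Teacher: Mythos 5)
Your proposal is correct and follows essentially the same route as the paper: both directions are read off from Theorem \ref{Ex-mc-thm1} with $k=2$, and planarity enters only through Lemma \ref{Pla-vee}(1) together with the $K_{3,3}$ obstruction in the $2$-perfectly-connected case. Two small points where your write-up is in fact tighter than the paper's: (i) your single dominating-vertex argument (taking $w$ to be the neighbour of $v$ in the side of size at most two, so that the cut-vertex of $G-w$ is the $V_1$-neighbour of the pendant vertex $v$) merges the paper's two subcases $|A|=1$ and $|A|=2$, the second of which the paper settles with the extra input of Lemma \ref{Pla-vee}(3); and (ii) you correctly isolate the degenerate graph $K_3$, which is $2$-perfectly-connected with $mc(K_3)=m-n+3$ yet lies outside $\mathcal{P}_1$ because a graph with a cut-vertex has at least three vertices --- an edge case the paper's proof silently passes over, since its case $|A|=1$ asserts that $G[B\cup\{v\}]$ has a cut-vertex, which fails when $|B|=1$.
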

\begin{proof}
By Lemma \ref{Pla-vee} (1) and Theorem \ref{Ex-mc-thm1}, $G$ is a planar graph and $mc(G)=m-n+3$ if $G\in \mathcal{P}_1$.
Suppose $mc(G)=m-n+3$.
Then by Theorem \ref{Ex-mc-thm1}, $G$ is either a $2$-perfectly-connected graph or a graph in $\mathcal{A}_{n,2}$.
If $G\in \mathcal{A}_{n,2}$,
then $G=v\vee H$ and $H$ is a connected graph with a cut-vertex.
Then by Lemma \ref{Pla-vee} (1), $H$ is a connected outerplanar graph with a cut-vertex.
If $G$ is a $2$-perfectly-connected graph, then $V(G)$ can be partitioned into three nonempty sets $\{v\},A,B$ such that $A,B$ form a complete bipartite graph.
Let $|A|\leq |B|$.
Then $1\leq |A|\leq 2$ (otherwise $G$ contains a $K_{3,3}$ as its subgraph).
If $|A|=1$ (say $A=\{x\}$), then by Lemma \ref{Pla-vee} (1), $G[B]$ is a connected outerplanar graph.
Let $H=G[B\cup v]$.
Then $H$ is a connected outerplanar graph with a cut-vertex and $G=x\vee H$, i.e., $G\in \mathcal{P}_1$.
If $|A|=2$, i.e., $G[A]=K_2$, then $G[B]$ is a path by Lemma \ref{Pla-vee} (3).
Let $A=\{x,y\}$ and $N(v)=\{x,z\}$,
Then $G-x=(y\vee G[B])\cup vz$.
Since $G[B]$ is a path, $G-x$ is an outerplanar graph with a cut-vertex $z$.
Since $G=x\vee (G-x)$, we get $G\in \mathcal{P}_1$.
\end{proof}

Let $\mathcal{P}_2=\{v\vee H:~H\mbox{ is a $2$-connected outerplanar graph and }H\neq u\vee P_{n-2}\}.$

\begin{lemma}
If $G$ is a $3$-connected but not $4$-connected planar graph, then $mc(G)=m-n+3$ if and only if either $G=2K_1\vee P_{n-2}$ or $G\in\mathcal{P}_2$,
and $mc(G)=m-n+4$ if and only if $G=K_2\vee P_{n-2}$.
\end{lemma}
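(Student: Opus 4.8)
The plan is to set $k=\kappa(G)=3$ and invoke the two classification theorems of Section~2 directly: Theorem~\ref{Ex-mc-thm1} says that $mc(G)=m-n+4$ exactly when $G\in\mathcal{A}_{n,3}$ or $G$ is $3$-perfectly-connected, while Theorem~\ref{Ex-thm2} (applicable since $k\ge3$) says that $mc(G)=m-n+3$ exactly when $G\in\mathcal{B}^1_{n,3}\cup\mathcal{B}^2_{n,3}\cup\mathcal{B}^3_{n,3}$. Since $G$ is planar, the whole problem reduces to deciding which members of these families are planar, and the engine for that is Lemma~\ref{Pla-vee}, which characterizes planarity of $v\vee H$, $2K_1\vee H$ and $K_2\vee H$. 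I will not recompute the converse directions: once I show that $2K_1\vee P_{n-2}$ lies in $\mathcal{B}^3_{n,3}$ and that every graph of $\mathcal{P}_2$ lies in $\mathcal{B}^2_{n,3}$, Lemma~\ref{Ex-mc-lem2} immediately yields $mc=m-n+3$, and membership in $\mathcal{A}_{n,3}$ or in the perfectly-connected class yields $mc=m-n+4$.

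For the value $m-n+4$: a planar graph of $\mathcal{A}_{n,3}=\{K_2\vee H: H\text{ connected},\ |H|=n-2,\ H\text{ has a cut-vertex}\}$ must, by Lemma~\ref{Pla-vee}(3), have $H$ a linear forest, so $H=P_{n-2}$ and $G=K_2\vee P_{n-2}$. For a $3$-perfectly-connected $G$ with parts $\{v\},V_1,V_2,V_3$ I split on the sizes $|V_i|$. If two parts have size $\ge2$, an internal edge in each of these two parts together with one vertex of the third part exhibits a $K_5$ subgraph, so $G$ is non-planar. If all parts are singletons, then $G=K_4=K_2\vee P_2$. Otherwise exactly one part, say $V_1$, is large; since $\{a_2,a_3\}\vee G[V_1]\subseteq G$ with $V_2=\{a_2\}$, $V_3=\{a_3\}$, Lemma~\ref{Pla-vee}(3) forces $G[V_1]$ to be a path, and if $v$'s unique neighbour in $V_1$ were an internal vertex $u_j$ the triple $\{a_2,a_3,u_j\}$ versus $\{u_{j-1},u_{j+1},v\}$ would be a $K_{3,3}$ subgraph; hence $v$ attaches at an endpoint, and recognizing $\{a_2,a_3\}$ as joined to all remaining vertices gives $G=K_2\vee P_{n-2}$. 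Thus the planar graphs with $mc=m-n+4$ are exactly the $K_2\vee P_{n-2}$.

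For the value $m-n+3$: planar members of $\mathcal{B}^2_{n,3}=\{v\vee G[V]:G[V]\text{ is }2\text{-connected but not }3\text{-connected}\}$ are, by Lemma~\ref{Pla-vee}(1), exactly $v\vee H$ with $H$ a $2$-connected outerplanar graph, and the defining clause excluding $\mathcal{A}_{n,3}$ and perfectly-connected graphs removes precisely $H=u\vee P_{n-2}$ (which gives $K_2\vee P_{n-2}$); so this family contributes exactly $\mathcal{P}_2$. Planar members of $\mathcal{B}^3_{n,3}=\{2K_1\vee G[V]:G[V]\text{ connected with a cut-vertex}\}$ have, by Lemma~\ref{Pla-vee}(2), $G[V]$ a cycle or linear forest; the cut-vertex rules out cycles, so $G[V]=P_{n-2}$ and this family contributes exactly $2K_1\vee P_{n-2}$. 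It then remains to show $\mathcal{B}^1_{n,3}\cap\{\text{planar}\}\subseteq\mathcal{P}_2\cup\{2K_1\vee P_{n-2}\}$: writing the parts as $\{u\},U_1,U_2$ with $u$ having two neighbours in $U_1$ and one, say $y_1$, in $U_2$, I case on $|U_2|$. If $|U_2|\le2$, then $y_1$ (using connectivity of $G[U_2\cup u]$ to force the edge $y_1y_2$ when $|U_2|=2$) is adjacent to every other vertex, so $G=y_1\vee(G-y_1)$ with $G-y_1$ a $2$-connected outerplanar graph, i.e. $G\in\mathcal{P}_2$. If $|U_2|\ge3$, planarity forces $|U_1|=2$, and either an edge inside $U_1$ yields a dominating vertex (again $\mathcal{P}_2$) or $U_1$ is an independent pair, whence $G=2K_1\vee G[U_2\cup u]$ with $u$ of degree one in $G[U_2\cup u]$, so Lemma~\ref{Pla-vee}(2) makes $G[U_2\cup u]$ a path with $u$ an endpoint and $G=2K_1\vee P_{n-2}$.

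The main obstacle is the $\mathcal{B}^1_{n,3}$ analysis: unlike $\mathcal{B}^2$ and $\mathcal{B}^3$, its defining data does not present a ready-made join, so one must extract a dominating vertex (or the $2K_1$-join) by hand from the complete-bipartite-plus-attachment structure, repeatedly using that each $G[U_i\cup u]$ is connected while $u$ has a prescribed, small number of neighbours in each $U_i$. A secondary technical point, appearing in both values, is the bookkeeping of the overlap $K_2\vee P_{n-2}$ versus $v\vee(u\vee P_{n-2})$, so that the exclusion defining $\mathcal{P}_2$ matches exactly the graphs that jump from $mc=m-n+3$ up to $mc=m-n+4$; checking the small base cases (for instance $K_4=K_2\vee P_2$ and $2K_1\vee P_3$) is what keeps the two ``if and only if'' statements from colliding.
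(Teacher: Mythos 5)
Your overall strategy is exactly the paper's: reduce to the structural families via Theorem~\ref{Ex-mc-thm1} and Theorem~\ref{Ex-thm2}, then use Lemma~\ref{Pla-vee} to decide which members of $\mathcal{A}_{n,3}$, the $3$-perfectly-connected class, and $\mathcal{B}^1_{n,3}\cup\mathcal{B}^2_{n,3}\cup\mathcal{B}^3_{n,3}$ are planar. Most of your case analysis is sound: the $K_5$-subgraph and $K_{3,3}$-subgraph arguments in the $3$-perfectly-connected case, the $\mathcal{B}^2$ and $\mathcal{B}^3$ reductions (your cut-vertex argument ruling out the cycle in $\mathcal{B}^3$ is even a bit cleaner than the paper's appeal to non-$4$-connectivity), and the sub-case of $\mathcal{B}^1$ where $U_1$ is an independent pair all check out.

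There is, however, one step that fails as stated: in the $\mathcal{B}^1_{n,3}$ analysis with $|U_2|\geq 3$ and an edge inside $U_1$, you claim the resulting dominating vertex puts $G$ in $\mathcal{P}_2$. It does not. In that configuration $u$ is adjacent to both vertices of $U_1$, so $G=K_2\vee G[U_2\cup u]$, and Lemma~\ref{Pla-vee}(3) forces $G[U_2\cup u]$ to be a path, i.e.\ $G=K_2\vee P_{n-2}$. This graph is precisely the one excluded from $\mathcal{P}_2$ (deleting any dominating vertex of $K_2\vee P_{n-2}$ leaves a fan $u\vee P$), and it has $mc=m-n+4$, not $m-n+3$. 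The correct resolution --- and the one the paper uses in the analogous case $G[A]=K_2$ --- is that this sub-case is vacuous: $K_2\vee P_{n-2}$ lies in $\mathcal{A}_{n,3}$ (or is $3$-perfectly-connected for small $n$), contradicting the clause in the definition of $\mathcal{B}^1_{n,3}$ that $G$ is neither a $3$-perfectly-connected graph nor a graph of $\mathcal{A}_{n,3}$. Since the case is empty, your final classification is not damaged, but the parenthetical ``again $\mathcal{P}_2$'' is false and, taken literally, would place $K_2\vee P_{n-2}$ in both classes --- exactly the collision your last paragraph warns against. Relatedly, in your $|U_2|\leq 2$ sub-case the conclusion $G\in\mathcal{P}_2$ does hold, but only after invoking that same exclusion clause (equivalently, $mc(G)\neq m-n+4$) to guarantee $G-y_1$ is not a fan; the dominating-vertex structure alone does not give it, so this check should be made explicit rather than deferred to the closing remarks.
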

\begin{proof}
By Lemma \ref{Pla-vee} (3) and Theorem \ref{Ex-mc-thm1}, $K_2\vee P_{n-2}$ is a planar graph with $mc(K_2\vee P_{n-2})=m-n+4$.
Next, we prove that $G=K_2\vee P_{n-2}$ if  $mc(G)=m-n+4$.
Suppose $mc(G)=m-n+4$.
Then either $G\in \mathcal{A}_{n,3}$ or $G$ is a $3$-perfectly-connected graph.
If $G$ is the latter, then $V(G)$ can be partitioned into four parts $v,V_1,V_2,V_3$, such that each $V_i$ induces a connected subgraph,
$V_1,V_2,V_3$ form a complete 3-partite graph, and $v$ has precisely one neighbor in each $V_i$.
Let $|V_1|\leq |V_2|\leq |V_3|$.
If $|V_1|=|V_2|=1$, then $G[V_1\cup V_2]$ is an edge, say $e$.
Thus, $G=e\vee G[V_3\cup v]$.
By Lemma \ref{Pla-vee} (3), since $G$ is a $3$-connected graph, $G[V_3\cup v]$ is a path of order $n-2$.
Therefore, $G=K_2\vee P_{n-2}$.
If $|V_2|\geq 2$, then $G[V_1\cup V_2\cup V_3]$ contains a $K_5$-minor, a contradiction.
If $G$ is the former, i.e., $G\in \mathcal{A}_{n,3}$, then $G=K_2\vee H$.
By Lemma \ref{Pla-vee} (3), since $G$ is a $3$-connected graph, $G=K_2\vee P_{n-2}$.
Therefore, $mc(G)=m-n+4$ if and only if $G=K_2\vee P_{n-2}$.

If $mc(G)=m-n+3$, then $G\in \mathcal{B}^1_{n,3}\cup \mathcal{B}^2_{n,3}\cup \mathcal{B}^3_{n,3}$.
If $G\in \mathcal{B}^3_{n,3}$, then $V(G)$ can be partitioned into two parts $U,V$ such that $G[U]=K_2^-=2K_1$, $G[V]$ is a connected graph and $G=G[U]\vee G[V]$.
By Lemma \ref{Pla-vee} (2), since $G$ is a $3$-connected graph, $G[V]$ is either a cycle or a path.
Since $G$ is not a $4$-connected planar graph, $G[V]$ is a path, i.e., $G=2K_1\vee P_{n-2}$.
If $G\in\mathcal{B}^2_{n,3}$, then $G=K_1\vee H$,  where $H$ is a $2$-connected but not $3$-connected graph.
Since $G$ is planar, by Lemma \ref{Pla-vee} (1), $H$ is a $2$-connected outerplanar graph (recall that the outerplanar graph is not $3$-connected).
Therefore, $G\in \mathcal{P}_2$.
If $G\in \mathcal{B}^1_{n,3}$, then $V(G)$ can be partitioned into three parts $v,A,B$, such that $v$ has two neighbors in $A$ and one neighbor
in $B$, and $A,B$ form a complete bipartite graph.

If $G[A]=K_2$, then by Lemma \ref{Pla-vee} (3), $G[B]$ is a path $P_{n-3}$.
Thus, $G=K_2\vee P_{n-2}$, a contradiction to the assumption that $mc(G)=m-n+3$.
If $G[A]=2K_1$, then $G=G[A]\vee G[B\cup v]$.
By Lemma \ref{Pla-vee} (2), $G[B\cup v]$ is either a path $P_{n-3}$ or a cycle $C_{n-3}$.
Since $v$ has precisely one neighbor in $B$, $G[B\cup v]$ is a path.
Thus, $G=2K_1\vee P_{n-2}$.

If $|A|\geq 3$, then $|B|\leq 2$.
Let $x$ be the neighbor of $v$ in $B$.
Since $mc(G)=m-n+3$, we have $G\neq K_2\vee P_{n-2}$.
If $|B|=2$, i.e., $G[B]=K_2$, then $G=x\vee(G-x)$.
Thus, $G-x$ is a $2$-connected outerplanar graph.
If $|B|=1$, then $V(B)=\{x\}$ and $G=x\vee (G-x)$, and thus $G-x$ is a $2$-connected outerplanar graph.
Therefore, $G\in\mathcal{P}_2$.
\end{proof}

\begin{claim}\label{Ex-claim1}
Suppose $G$ is a $k$-connected planar graph and $S$ is a vertex-cut with $|S|=k$.
If $k\geq 4$, then $G[S]$ does not contain the vertices of degree greater than two.
\end{claim}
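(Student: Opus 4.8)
The plan is to argue by contradiction, producing a $K_{3,3}$-minor and invoking the forbidden-minor characterization of planarity. First I would record the standard structural fact about minimum vertex-cuts. Since $G$ is $k$-connected and $|S|=k$, the set $S$ is a minimum cut, so $G-S$ has $r\geq 2$ components $A_1,\dots,A_r$, and moreover \emph{every} vertex of $S$ has at least one neighbor in \emph{every} component $A_i$. Indeed, if some $w\in S$ had no neighbor in $A_i$, then in $G-(S\setminus\{w\})$ the piece $A_i$ would be isolated from the remaining components (the only vertex of $S$ still present, namely $w$, is not adjacent to $A_i$), so $S\setminus\{w\}$ would be a cut of size $k-1$, contradicting $k$-connectivity. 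This establishes the key adjacency data I will exploit.

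Next, suppose toward a contradiction that some $v\in S$ has degree at least three in $G[S]$; say $v$ is adjacent in $G[S]$ to three distinct vertices $x_1,x_2,x_3\in S$. This is precisely where the hypothesis $k\geq 4$ enters, as it guarantees $S$ is large enough to contain $v$ together with three further vertices. I would then contract the connected component $A_1$ to a single vertex $a$ and $A_2$ to a single vertex $b$; both contractions are legitimate for forming a minor because each $A_i$ is connected. By the structural fact, $a$ is adjacent to each of $x_1,x_2,x_3$ and $b$ is adjacent to each of $x_1,x_2,x_3$, while $v$ is adjacent to $x_1,x_2,x_3$ by assumption.

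The six vertices $a,b,v,x_1,x_2,x_3$ are pairwise distinct, since $a,b$ arise from $A_1,A_2$ while $v,x_1,x_2,x_3$ lie in $S$, and $A_1,A_2,S$ are mutually disjoint. Hence the bipartition $(\{a,b,v\},\{x_1,x_2,x_3\})$ realizes all nine cross edges, exhibiting a $K_{3,3}$-minor of $G$ (note that the edges internal to each part are irrelevant, so nothing further need be checked). This contradicts the planarity of $G$ and completes the argument.

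I expect the only point requiring genuine care—rather than a true obstacle—to be the verification that every vertex of $S$ meets every component of $G-S$, together with the use of $r\geq 2$ in that step; once this is in hand, the $K_{3,3}$-minor drops out immediately with no further technical work. The distinctness of the six branch-vertices should also be stated explicitly, but it is automatic from the disjointness of $S$ and the components.
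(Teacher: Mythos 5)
Your proof is correct, but it reaches the conclusion by a different route from the paper's. The paper applies Menger's theorem to obtain $k$ internally disjoint paths between two vertices in distinct components of $G-S$, contracts these paths to exhibit a $K_{2,k}$-minor whose $k$-vertex side is $S$, and then invokes Lemma \ref{Pla-vee} (2) (that $2K_1\vee H$ is planar only if $H$ is a cycle or a linear forest) to conclude that $G[S]$ has maximum degree at most two. You instead use the elementary minimum-cut fact that every vertex of $S$ has a neighbor in every component of $G-S$, contract two whole components $A_1,A_2$ to get two apex vertices joined to all of $S$, and finish by reading off a $K_{3,3}$-minor directly from the hypothetical vertex $v$ of degree three in $G[S]$. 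The underlying structure is the same in both arguments (two contracted hubs dominating $S$, then planarity), but your derivation is more self-contained: it avoids Menger's theorem and the auxiliary lemma, in effect inlining the only case of Lemma \ref{Pla-vee} (2) that is needed, namely that $2K_1\vee S_3$ contains $K_{3,3}$. What the paper's route buys is a stronger structural conclusion---$G[S]$ is a cycle or a linear forest---which it reuses later (for instance in the proof of Claim \ref{Ex-claim2}), whereas your argument yields exactly the stated degree bound. One small inaccuracy worth fixing: you claim the hypothesis $k\geq 4$ is what guarantees that $S$ contains $v$ together with three further vertices, but that is already forced by the contradiction hypothesis that $v$ has three neighbors inside $S$; the hypothesis $k\geq 4$ only marks where the claim is non-vacuous, since for $|S|\leq 3$ no vertex of $G[S]$ can have degree exceeding two.
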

\begin{proof}
Let $u,v$ be two vertices in different components of $G-S$.
Since $G$ is a $k$-connected graph, there are $k$ internally disjoint $uv$-paths $L_1,\cdots,L_k$.
Let $H$ be a graph obtained from $\bigcup_{i\in[k]}L_i$ by contracting all edges but those incident with $u$ and $v$.
Then $H=K_{2,k}$ is a minor of $G$ with one part $S$.
Thus, by Lemma \ref{Pla-vee} (2), $G[S]$ does not contain the vertices of degree greater than two.
\end{proof}

\begin{claim}\label{Ex-claim2}
Let $G$ be a $k$-connected planar graph and $S$ be a vertex-cut with $|S|=k$.
Suppose $\Gamma$ is an extremal MC-coloring of $G$ such that $G[S]$ does not contain nontrivial edges.
Then
\begin{enumerate}
\item if $k=4$ and $G[S]$ is not a $4$-cycle, then $mc(G)=m-n+2$;
\item if $k=5$, then $mc(G)=m-n+2$.
\end{enumerate}
In addition, if $k=4$ and $G[S]$ does not contain nontrivial edges under any extremal MC-colorings, then $mc(G)=m-n+2$.
\end{claim}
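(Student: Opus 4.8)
The goal is to prove $w(\Gamma)\ge n-2$: since coloring a spanning tree monochromatically already shows $w(\Gamma)\le n-2$ for the extremal $\Gamma$, this forces $w(\Gamma)=n-2$ and hence $mc(G)=m-n+2$. Throughout let $A_1,\dots,A_r$ be the components of $G-S$, and recall that $S$, being a minimum vertex-cut of the $k$-connected graph $G$, has the property that every $v\in S$ has a neighbour in each $A_i$ (otherwise $S\setminus\{v\}$ would already separate, contradicting $\kappa(G)=k$). My first reduction is to show $r=2$. Contracting each $A_i$ to a single vertex $a_i$ yields a planar minor in which every $a_i$ is adjacent to all of $S$; if $r\ge 3$ then $\{a_1,a_2,a_3\}$ together with any three vertices of $S$ span a $K_{3,3}$, which is impossible. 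So $r=2$, and I write $A_1,A_2$ for the two components.

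Next I would pin down the structure of $S$. By Claim~\ref{Ex-claim1} we have $\Delta(G[S])\le 2$, so $G[S]$ is a disjoint union of paths and cycles. Combining this with $\delta(G)\ge k$: a vertex $v\in S$ has at most two neighbours inside $S$, hence at least $k-2$ neighbours in $A_1\cup A_2$; if both $|A_1|=|A_2|=1$ then $d_G(v)\le 4$, which forces $k=4$ and $G[S]$ to be $2$-regular, i.e. $G[S]=C_4$. Since this is excluded in (1) and impossible for $k=5$, at least one component, say $A_2$, satisfies $|A_2|\ge 2$. I would also observe that the nontrivial trees of $\Gamma$ cover all of $V(G)$: every non-$S$ vertex must reach the other (nonempty) component, and since there are no edges between $A_1$ and $A_2$ this connection has length $\ge 2$, so the vertex lies in a nontrivial tree; and if some $v\in S$ lay in no nontrivial tree then all of its monochromatic connections would be single edges, making $v$ adjacent to all $n-1$ other vertices, contradicting that $v$ has at most two neighbours in $S$ while $|S|-1=k-1\ge 3$.

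The heart of the proof is the bound $w(\Gamma)\ge n-2$. For $u\in A_1$ the trees $\mathcal{T}_u$ connecting $u$ to $A_2$ pairwise meet only at $u$ by simplicity, and each must cross the cut, so their traces on $S$ are nonempty and pairwise disjoint, giving $|V(\mathcal{T}_u)\cap S|\ge|\mathcal{T}_u|$ and, by the same counting used in the proof of Theorem~\ref{Ex-thm2}, $w(\mathcal{T}_u)\ge |A_2|+|V(\mathcal{T}_u)\cap A_1|-1$. A single tree reaching all of $A_1\cup A_2\cup S$ already wastes exactly $n-2$, so the remaining task is to show that splitting the connection among several trees cannot do better. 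I would do this by adding to $w(\mathcal{T}_u)$ the waste forced by the vertices of $A_1$ not reached from $u$ (via the subadditive estimate, analogous to the one in the proof of Theorem~\ref{Ex-thm2}, that $\bigcup_{w\in A_1}\mathcal{T}_w$ wastes at least $|A_1|+|A_2|-1=n-k-1$) together with the waste forced by joining each of the $k$ vertices of $S$ to the opposite component; because $G[S]$ has no nontrivial edges and maximum degree $\le 2$, no tree can economise by running along $S$, so the $k$ ports of $S$ are charged separately and the total reaches $n-2$.

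The step I expect to be the main obstacle is precisely this last bookkeeping: showing that every economy obtained by using several crossing trees (which lowers the surplus $|V(\mathcal{T}_u)\cap S|-|\mathcal{T}_u|$) is matched by an equal amount of extra forced waste, so that the running total never drops below $n-2$. The hypothesis $|A_2|\ge 2$ is exactly what blocks the cheap ``double-star'' colorings available when both sides are single vertices: the unique surviving configuration with such an economy is the octahedron $2K_1\vee C_4$ (both $A_i$ singletons, $G[S]=C_4$), for which one checks directly that $mc=m-n+3$. This explains why $C_4$ is removed from (1), why no exception is needed for $k=5$, and why the final clause strengthens the assumption to ``under every extremal MC-coloring'': the octahedron does admit an extremal MC-coloring whose $G[S]$ carries nontrivial edges, so once that possibility is ruled out the bound $w(\Gamma)\ge n-2$ goes through unchanged and yields $mc(G)=m-n+2$.
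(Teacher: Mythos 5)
Your proposal has a genuine gap, and the route you sketch for closing it cannot be completed, because the intermediate statement you reduce everything to is false. You reduce part (1) to: ``if $G[S]\neq C_4$ then some component satisfies $|A_2|\ge 2$, and once $|A_2|\ge 2$ the charging/bookkeeping argument forces $w(\Gamma)\ge n-2$.'' Take $G=2K_1\vee C_{n-2}$ with $n\ge 8$, independent vertices $u,v$, cycle $w_1w_2\cdots w_{n-2}$, and the minimum cut $S=\{u,v,w_1,w_j\}$ with $w_1,w_j$ nonadjacent and each arc of the cycle containing at least two vertices. Then $k=4$, both components of $G-S$ have order at least $2$, and the coloring whose nontrivial trees are a spanning path $L$ of the cycle and a $2$-path $uw_kv$ (with $w_k\notin S$) is an extremal MC-coloring wasting only $n-3$ colors, with no nontrivial edge inside $G[S]$. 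So $|A_2|\ge 2$ does not force $w(\Gamma)\ge n-2$; had your bookkeeping succeeded, it would prove $mc(2K_1\vee C_{n-2})=m-n+2$, contradicting the paper's own lemma on $4$-connected planar graphs. Of course $G[S]=C_4$ here, but your argument uses the hypothesis $G[S]\neq C_4$ only to exclude the case where both components are singletons, so it cannot distinguish this example from the cases you must handle; in particular your claim that the octahedron is the unique exceptional configuration is false, since such cuts exist in $2K_1\vee C_{n-2}$ for every $n$.

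The missing idea --- which you yourself flag as ``the main obstacle'' and never supply --- is that the extra waste beyond $n-k-1$ comes from monochromatic connections \emph{inside} $S$, not from joining $S$ to the two components: a vertex of $S$ reaches both components by trivial edges, so ``charging the $k$ ports of $S$'' yields nothing. The paper argues as follows. By Claim \ref{Ex-claim1} and Lemma \ref{Pla-vee} (2), $G[S]$ is a cycle or a linear forest; hence $\overline{G[S]}$ contains a $P_4$ when $k=4$ and $G[S]\neq C_4$, and a $5$-cycle when $k=5$. Every nonadjacent pair inside $S$ must be joined by a nontrivial tree, and since $G[S]$ has no nontrivial edges, a tree $T$ with $x_T=|V(T)\cap S|\ge 2$ wastes at least $x_T-1$ colors beyond the generic count, giving $w(\Gamma)\ge n-k-1+\sum_{T}(x_T-1)$. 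A case analysis on $\max_T x_T$, using the $P_4$ or $C_5$ in the complement to produce enough distinct nonadjacent pairs (hence enough distinct trees, or a large $x_T$ plus further trees), yields $w(\Gamma)\ge n-2$ in every case --- and for $k=5$ with $\max x_T=2$ it even gives $n-1$, a contradiction. Finally, the ``in addition'' clause is not, as you suggest, the same bound ``going through unchanged'': in the surviving case ($G[S]=C_4$, $\max x_T=2$) the paper assumes $mc(G)\ge m-n+3$, deduces that the two trees joining the diagonals of the $4$-cycle are $2$-paths, and recolors them trivially while giving a $3$-path of $G[S]$ a new color; this produces another extremal MC-coloring whose $G[S]$ does contain nontrivial edges, contradicting the strengthened hypothesis directly. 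Your $r=2$ reduction and your observation that the nontrivial trees cover $V(G)$ are correct, but they do not touch the core difficulty.
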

\begin{proof}
We first prove (1) and (2).
By Claim \ref{Ex-claim1}, $G$ has a $K_{2,k}$-minor with one part $S$.
Since $G$ is a planar graph, by Lemma \ref{Pla-vee} (2), $G[S]$ is either a cycle or a linear forest.
Thus, $\overline{G[S]}$ contains a $5$-cycle if $k=5$.
For $k=4$, $\overline{G[S]}$ contains a $P_4$ if $G[S]\neq C_4$.
Suppose $A_1,\cdots,A_r$ are the components of $G-S$.

Let $\Gamma$ be an extremal MC-coloring of $G$.
We use $\mathcal{S}$ to denote the set of all nontrivial trees of $G$.
Choose two vertices $u,v$ from $A_1,A_2$, respectively.
Let $U=V(A_1)-V(\mathcal{T}_u)$ and $\mathcal{F}=\mathcal{T}_v-\mathcal{T}_u$.
Assume $\mathcal{T}=\mathcal{T}_u\cup \mathcal{F}$.
For each $T\in \mathcal{S}$, let $x_T=|V(T)\cap S|$ when $|V(T)\cap S|\geq 2$ and let $x_T=1$ when $|V(T)\cap S|\leq 1$.
Since $V(G)-S\subseteq V(\mathcal{T})$ and each tree of $\mathcal{T}$ contains at least one vertex of $S$, $\mathcal{T}$ wastes at least $n-k-1+\sum_{T\in \mathcal{T}}(x_T-1)$ colors.
Since $G[S]$ does not contain nontrivial edges, if $x_T\geq 2$, then $T$ wastes at least $x_T-1$ colors.
Then $\Gamma$ wastes $w_\Gamma\geq n-k-1+\sum_{T\in \mathcal{S}}(x_T-1)$ colors.
Let $T$ be a tree of $\mathcal{S}$ such that $x_T$ is maximum.

Suppose $x_T\geq 4$.
If $k=4$, then $w_\Gamma\geq n-2$.
If $k=5$ and $x_T\geq 5$, then $w_\Gamma\geq n-2$.
If $k=5$ and $x_T=4$, then let $S-V(T)=\{a\}$.
Since $\overline{G[S]}$ contains a $5$-cycle,
$a$ does connect a vertex of $S-a$ in $G[S]$.
Therefore, $a$ connects this vertex by a nontrivial tree different from $T$.
Thus, $w_\Gamma\geq n-2$.

Suppose $x_T=3$.
If $k=4$, then let $S-V(T)=\{a\}$.
Since $\overline{G[S]}$ contains a $P_4$,
$a$ connects a vertex of $S-a$ by a nontrivial tree.
Thus, $w_\Gamma\geq n-2$.
If $k=5$, then let $\{a,b\}=S-V(T)$.
Since $\overline{G[S]}$ contains a $5$-cycle,
$a$ connects a vertex of $S-a$ by a nontrivial tree $T_1$,
and $b$ connects a vertex of $S-a$ by a nontrivial tree $T_2$.
Whenever $T_1=T_2$ or not, $\Gamma$ wastes at least $n-2$ colors.

Suppose $x_T=2$.
Since $T$ is a tree of $\mathcal{S}$ such that $x_T$ is maximum,
for any two different nonadjacent vertex pairs of $S$, there are two different nontrivial trees connecting them, respectively.
If $k=4$, then since $\overline{G[S]}$ contains a $P_4$, $\Gamma$ wastes at least $n-k-1+3=n-2$ colors.
If $k=5$, then since $\overline{G[S]}$ is a $5$-cycle, $\Gamma$ wastes at least $n-k-1+5=n-1$ colors, which contradicts that $\Gamma$ is extremal.

Now we prove that if $k=4$ and $G[S]$ does not contain nontrivial edges under any extremal MC-colorings, then $mc(G)=m-n+2$.
Suppose $\Gamma$ is an extremal MC-coloring of $G$ and $T$ is a nontrivial tree with $x_T=|V(T)\cap S|$ maximum.
Similar to the above proof for $k=4$, we can obtain that $mc(G)=m-n+2$ except for the case that $G[S]$ is a $4$-cycle and $x_T=2$.
For the case that $G[S]$ is a $4$-cycle and $x_T=2$, let $E(\overline{G[S]})=\{v_1v_2,v_3v_4\}$. Then there is a nontrivial tree $T_1$ connecting $v_1,v_2$, and a nontrivial tree $T_2$ connecting $v_3,v_4$.
Suppose $mc(G)\geq m-n+3$.
Since $\Gamma$ wastes $n-k-1+(|T_1|-2)+(|T_2|-2)\leq n-3$,
$T_1$ and $T_2$ are $2$-paths.
Let $\Gamma'$ be an edge-coloring of $G$ obtained from $\Gamma$ by recoloring $E(T_1\cup T_2)$ by trivial colors and recoloring a $3$-path of $G[S]$ by a new nontrivial color.
Then $\Gamma'$ is an extremal MC-coloring of $G$ and $G[S]$ contains nontrivial edges under $\Gamma'$, a contradiction.
\end{proof}

\begin{claim}\label{Ex-claim3}
Let $\Gamma$ be a simple extremal MC-coloring of $G$ and $e=xy$ be a nontrivial edge in $G$.
Suppose $mc(G)=e(G)-|G|+x$ and $H$ is the underlying graph of $G/e$.
Then $mc(H)\geq e(H)-|H|+x$.
\end{claim}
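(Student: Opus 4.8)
The plan is to transport the extremal coloring $\Gamma$ of $G$ across the contraction and verify that the waste does not grow. Write the hypothesis as $w(\Gamma)=|G|-x$, so $\Gamma$ wastes $|G|-x$ colors, and let $c$ be the number of common neighbors of $x$ and $y$ in $G$. Contracting $e$ identifies $x,y$ and, for each common neighbor $z$, merges the two edges $xz,yz$ into a single edge of $H$; hence $|H|=|G|-1$ and $e(H)=e(G)-1-c$, so that $e(H)-|H|+x=mc(G)-c$. Consequently it suffices to produce an MC-coloring of $H$ wasting at most $|H|-x=w(\Gamma)-1$ colors, that is, to save exactly one unit of waste under the contraction.

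First I would pass through the multigraph $G/e$, keeping every edge of $G$ other than $e$ with its $\Gamma$-color. Since $\Gamma$ is extremal, each color class is a tree, and the unique class $T$ containing $e$ becomes $T/e$, still a tree but with one fewer edge; every other class is merely relabelled, because by simplicity no nontrivial class besides $T$ contains both $x$ and $y$, so no other class acquires a repeated vertex. Contraction sends monochromatic paths to monochromatic walks, so this coloring is monochromatically connected on $G/e$, and its waste is exactly $w(\Gamma)-1=|H|-x$, the saving coming entirely from $T$.

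It then remains to delete, for each common neighbor $z$, one of the two parallel copies of $vz$ (where $v$ is the identified vertex) so as to obtain the simple graph $H$, while keeping the coloring monochromatically connected; since deleting an edge never increases the waste, the bound $|H|-x$ is preserved automatically. I would classify each common neighbor $z$ using the triangle $xyz$ and simplicity: at most one of $xz,yz$ lies in $T$ (else $T$ contains a triangle), and if $xz$ lies in a nontrivial tree $T'$ and $yz$ in a nontrivial tree $T''$, then $T',T''$ and $T$ are pairwise distinct, since two nontrivial trees meet in at most one vertex while $T$ already shares $x$ with any tree through $xz$ and $y$ with any tree through $yz$. Whenever one of $xz,yz$ lies in $T$ or is trivial, the partner edge is forced to be trivial, and discarding it costs only the pair $\{v,z\}$, which the surviving copy of $vz$ still serves.

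The main obstacle is the case where both $xz$ and $yz$ are nontrivial, lying in distinct trees $T'\ni x$ and $T''\ni y$: deleting either copy of $vz$ would disconnect the tree it belongs to. To handle this I would merge $T'$ and $T''$ into a single color. In $G$ they meet only at $z$, so $T'\cup T''$ is a tree; after contracting $e$ the two images of $vz$ form the unique cycle of $(T'\cup T'')/e$, and deleting one copy restores a spanning tree on $V(T')\cup V(T'')$. This recoloring keeps every pair formerly joined in color $T'$ or $T''$ joined in the merged color, so monochromatic connectivity is preserved, and a direct count shows the merge is waste-neutral, so the one unit saved by $T$ survives. The remaining delicate point, which I would treat by processing the common neighbors one at a time, is to check that these merges never create a cycle across several common neighbors; this is guaranteed by simplicity, since any two nontrivial trees share at most one vertex, so the pattern of merges stays acyclic and each common neighbor contributes exactly one cycle to be broken by one deletion.
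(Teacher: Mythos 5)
Your route is the same as the paper's: transfer $\Gamma$ to the multigraph $G/e$ (the unique class $T$ containing $e$ becomes the tree $T/e$, saving exactly one unit of waste, and by simplicity no other class contains both $x$ and $y$, so all other classes survive intact), then remove one copy of each parallel pair at the new vertex $v$, deleting a trivial copy when one exists and, when both copies are nontrivial, merging the two trees into one spanning tree of their union, which is waste-neutral because the two trees meet in exactly the two vertices $v$ and $z$. The counting ($e(H)-|H|+x = mc(G)-c$, so it suffices to waste at most $w(\Gamma)-1$) is right, and your triangle argument that $xz\in T$ forces $yz$ to be trivial is correct. One slip in that case analysis is harmless but should be fixed: a \emph{trivial} $xz$ does not force $yz$ to be trivial (only membership in $T$ does), so "discard the partner" is not always legitimate; the correct rule is simply to discard a trivial copy whenever at least one of the two copies is trivial.

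The genuine gap is your last claim, that simplicity makes the pattern of merges acyclic. It does not: simplicity only forbids two nontrivial trees from sharing \emph{two} vertices. Take four common neighbors $z_1,z_2,z_3,z_4$ and nontrivial stars $T_1'=\{xz_1,xz_4\}$, $T_2'=\{xz_2,xz_3\}$, $T_1''=\{yz_1,yz_2\}$, $T_2''=\{yz_3,yz_4\}$; every pairwise intersection is a single vertex, yet the merge pattern is the $4$-cycle $T_1'T_1''T_2'T_2''$. Processing $z_1,z_2,z_3$ in order merges all four trees into one class, and that class is temporarily not a tree (it still carries both copies of $vz_4$); then at $z_4$ the two copies lie in the \emph{same} class, a case your procedure never addresses. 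The proof survives, but this case must be stated: when both copies of $vz$ already lie in one class, delete either copy; the class remains connected (the surviving parallel copy lies on the cycle being destroyed) and the waste strictly decreases, so the bound $|H|-x$ still holds. The paper's formulation avoids this issue altogether by recoloring with a spanning tree of the union of the two classes and turning all leftover edges trivial, an operation that absorbs any extra cycles automatically and therefore needs no acyclicity assumption; your one-at-a-time bookkeeping, as written, relies on a false invariant.
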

\begin{proof}
Since $\Gamma$ is a simple extremal MC-coloring of $G$ and $mc(G)=e(G)-|G|+x$, $\Gamma$ wastes $|G|-x$ colors.
Suppose $z$ is the new vertex of $V(G/e)$.
Then any parallel edges are incident with $z$, and at most two parallel edges between two vertices.
Since $e$ is a nontrivial edge, $\Gamma$ is simple and every color-induced subgraph in $G$ is a tree, any color-induced subgraph of $G/e$ is a tree.
It is obvious that any two vertices of $G/e$ are connected by a monochromatic tree under $\Gamma|_{G/e}$.
Moreover, $\Gamma|_{G/e}$ wastes $|G|-1-x=|G/e|-x$ colors.

Suppose there are parallel edges $e_1,e_2$ between $u$ and $z$.
If there is a trivial and parallel edge between $u$ and $z$, say $e_1$, then we delete $e_1$.
Then the resulting graph is also monochromatic connected, and the edge-coloring wastes $|G/e|-x$ colors.
If the two parallel edges are nontrivial, then suppose the $e_1,e_2$ are edges of two nontrivial trees $T_1,T_2$, respectively.
Let $T$ be a spanning tree of $T_1\cup T_2$ containing $e_1$.
Let $\Gamma'$ be an edge-coloring of $G/e-e_2$ obtained from $\Gamma$ by recoloring $T$ with a new nontrivial color, and then recoloring any other edges of $E(T_1\cup T_2)-E(T)-e_2$ with trivial colors.
Then $\Gamma'$ is an MC-coloring of $G/e-e_2$ and $\Gamma'$ wastes at most $|G/e-e_2|-x=|G/e|-x$ colors.
By the above operation, we obtain an underlying graph $H$ of $G/e$, and a simple MC-coloring $\Gamma''$ of $H$, which wastes at most $|H|-x$ colors.
Thus, $mc(H)\geq e(H)-|H|+x$.
\end{proof}

\begin{claim}\label{Ex-claim4}
Let $G$ be a planar graph and $e=ab$ be an edge of $G$.
If the underlying graph of $G/e$ contains $\{u,v\}\vee P_t$ as its subgraph, $u$ is the new vertices and $a$ (and also $b$) connects two leaves of $P_t$,
then either $N_G(a)\cap I=\emptyset$ and $I\subseteq N_G(b)$,
or $N_G(b)\cap I=\emptyset$ and $I\subseteq N_G(a)$, where $I$ is the set of internal vertices of $P_t$.
\end{claim}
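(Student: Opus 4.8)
The plan is to exploit the planarity of $G$ by producing a forbidden $K_{3,3}$-minor whenever the stated dichotomy fails. First I would fix notation: write the path $P_t$ as $p_1p_2\cdots p_t$, so its two leaves are $p_1,p_t$ and its internal set is $I=\{p_2,\dots,p_{t-1}\}$; we may assume $t\ge 3$, since otherwise $I=\emptyset$ and the conclusion is vacuous. Next I would translate the hypotheses back into $G$. Since $\{u,v\}\vee P_t$ lives in the underlying graph of $G/e$ and $u$ is the vertex obtained by contracting $e=ab$, every edge of $G/e$ incident with $u$ comes from an edge of $G$ incident with $a$ or with $b$; hence each $p_i$ is a neighbour of $a$ or of $b$ in $G$. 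Because $v\neq u$ (contraction of $e$ can create parallel edges only at $u$), the edges $vp_i$ are genuine edges of $G$, so $v$ is adjacent to all of $p_1,\dots,p_t$; likewise the path edges $p_ip_{i+1}$ and the four edges $ap_1,ap_t,bp_1,bp_t$ (the hypothesis that $a$ and $b$ each join the two leaves) are edges of $G$, and $a,b,v,p_1,\dots,p_t$ are pairwise distinct.

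With this in hand I would run two minor arguments, each producing a $K_{3,3}$ with one part equal to $\{a,b,v\}$. For the first, suppose some internal vertex $p_i$ is adjacent to both $a$ and $b$; then $p_1,p_t,p_i$ are each adjacent to all of $a,b,v$, so $\{a,b,v\}$ versus $\{p_1,p_t,p_i\}$ is a $K_{3,3}$ in $G$, contradicting planarity. Hence no internal vertex is adjacent to both $a$ and $b$. Writing $I_a=\{p\in I:\ p\sim a\}$ and $I_b=\{p\in I:\ p\sim b\}$, this step together with the fact that each internal vertex is adjacent to $a$ or $b$ shows that $\{I_a,I_b\}$ partitions $I$. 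For the second argument, suppose both parts are nonempty and choose $p_i\in I_a$, $p_j\in I_b$ with $i<j$; since $2\le i<j\le t-1$, the subpath $p_ip_{i+1}\cdots p_j$ avoids $p_1$ and $p_t$. Contracting this subpath to a single branch vertex $P$, one sees $P$ is joined to $a$ (through $p_i$), to $b$ (through $p_j$) and to $v$ (through any of its vertices), while $p_1$ and $p_t$ are each joined to all of $a,b,v$. Thus $\{a,b,v\}$ versus $\{p_1,p_t,P\}$ is again a $K_{3,3}$-minor of $G$, a contradiction.

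Therefore at most one of $I_a,I_b$ is nonempty. If $I_b=\emptyset$ then $N_G(b)\cap I=\emptyset$ and $I=I_a\subseteq N_G(a)$, while if $I_a=\emptyset$ then symmetrically $N_G(a)\cap I=\emptyset$ and $I\subseteq N_G(b)$; this is exactly the asserted dichotomy. The step I expect to require the most care is the bookkeeping around the contraction, namely correctly deducing that neighbours of $u$ in the underlying graph of $G/e$ arise from neighbours of $a$ or $b$ in $G$, and checking that the branch sets (especially the contracted subpath) are pairwise disjoint and avoid $p_1,p_t$. Once these are in place the verification that all nine required cross-adjacencies are present is routine, so the only genuine idea is to contract the segment of $P_t$ lying between an $a$-neighbour and a $b$-neighbour.
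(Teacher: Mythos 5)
Your proof is correct and follows essentially the same route as the paper: assuming both $a$ and $b$ have neighbours among the internal vertices, you exhibit a $K_{3,3}$-minor with one part $\{a,b,v\}$ and the other part consisting of the two leaves together with a contracted segment of $P_t$, contradicting planarity. The only cosmetic difference is that you split into two cases (a common internal neighbour, or distinct ones, contracting only the subpath between them), whereas the paper contracts all internal edges of $P_t$ at once; you also spell out the implicit final step that every internal vertex is adjacent to $a$ or $b$ in $G$, which the paper leaves to the reader.
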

\begin{proof}
If $N_G(a)\cap I\neq \emptyset$ and $N_G(b)\cap I\neq \emptyset$,
then let $G'$ be a graph obtained from $G$ by contracting all but two pendent edges of $P_t$.
Then $G'$ has a $K_{3,3}$ with one part $\{a,b,v\}$, i.e., $G$ has a $K_{3,3}$-minor, a contradiction.
\end{proof}

\begin{lemma}
If $G$ is a $4$-connected but not $5$-connected planar graph, then $mc(G)\leq m-n+3$,
and $mc(G)=m-n+3$ if and only if $G=2K_1\vee C_{n-2}$.
\end{lemma}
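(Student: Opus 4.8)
The plan is to settle both the bound and the characterization at once by contracting a single well‑chosen edge and reducing to the already‑established $3$‑connected planar case. First I would dispose of the easy inclusion. By Lemma~\ref{Pla-vee}(2), $2K_1\vee C_{n-2}$ is planar; each cycle vertex has two cycle–neighbours and the two apices as its only neighbours, so it is $4$‑connected but not $5$‑connected. To see $mc\ge m-n+3$, color a Hamiltonian path of $C_{n-2}$ (a tree on $n-2$ vertices, wasting $n-4$ colors) with one color, color a $2$‑path joining the two apex vertices through a single cycle vertex with a second color (wasting one color), and make every remaining edge trivial; this MC‑coloring wastes exactly $n-3$ colors, so $mc(2K_1\vee C_{n-2})\ge m-n+3$, and together with the upper bound this forces equality.

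For the converse, suppose $G$ is $4$‑connected but not $5$‑connected planar with $mc(G)=m-n+c$ and $c\ge 3$, and fix a vertex‑cut $S$ with $|S|=4$. Since $mc(G)>m-n+2$, the last assertion of Claim~\ref{Ex-claim2} supplies a simple extremal MC‑coloring $\Gamma$ having a nontrivial edge $e=v_1v_2$ with both ends in $S$. Let $H$ be the underlying graph of $G/e$ and $z$ the contracted vertex. Then $H$ is planar with $|H|=n-1$; lifting a hypothetical $2$‑cut of $H$ back to $G$ would give a cut of $G$ of size at most $3$, so $H$ has no $2$‑cut, while $\{z,v_3,v_4\}$ separates the components of $G-S$, whence $\kappa(H)=3$. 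Thus $H$ is $3$‑connected but not $4$‑connected planar, and Claim~\ref{Ex-claim3} yields $mc(H)\ge e(H)-|H|+c$. The preceding lemma for $3$‑connected planar graphs gives $mc(H)\le e(H)-|H|+4$, so $c\le 4$; moreover it pins $H$ down, namely $H=K_2\vee P_{n-3}$ when $c=4$, and $H$ is one of $K_2\vee P_{n-3}$, $2K_1\vee P_{n-3}$, or a member of $\mathcal{P}_2$ when $c=3$.

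It then remains to reconstruct $G$ by splitting $z$ back into the adjacent pair $v_1,v_2$ and reading off the structure. When $H\supseteq\{z,w\}\vee P$ with both $v_1,v_2$ seeing the two ends of the path $P$, Claim~\ref{Ex-claim4} forces every internal vertex of $P$ to attach to exactly one of $v_1,v_2$; the degree‑$4$ requirement of $4$‑connectivity then leaves the other vertex (say $v_1$) adjacent only to the two ends of $P$, to $v_2$, and to $w$, which closes $P$ into a cycle $C_{n-2}$ and makes $w$ and $v_2$ two apices, each adjacent to all cycle vertices. Planarity (Lemma~\ref{Pla-vee}) forbids the two apices from being adjacent, since $K_2\vee C_{n-2}$ is nonplanar, so $G=2K_1\vee C_{n-2}$. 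In particular $mc(G)=m-n+3$, which contradicts $c=4$ and hence proves $mc(G)\le m-n+3$; and when $c=3$ the same analysis delivers exactly $G=2K_1\vee C_{n-2}$.

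The hard part will be this lifting analysis. One must also treat the placements of $z$ for which Claim~\ref{Ex-claim4} does not apply directly — notably $z$ an internal vertex of the path in $H$ — and rule out the alternative candidates $2K_1\vee P_{n-3}$ and the members of $\mathcal{P}_2$, in each case showing that the minimum‑degree‑$4$ condition together with the $K_{3,3}$‑ and $K_5$‑minor obstructions of planarity admits no $4$‑connected planar split other than $2K_1\vee C_{n-2}$ (for instance, for $H=2K_1\vee P_{n-3}$ the absence of the edge $zw$ leaves $v_1$ of degree at most three, a contradiction). As an alternative, the bound $mc(G)\le m-n+3$ alone can be obtained by excluding $mc(G)\in\{m-n+4,m-n+5\}$ via Theorems~\ref{Ex-mc-thm1} and~\ref{Ex-thm2}: the families $\mathcal{A}_{n,4}$ and the $4$‑perfectly‑connected graphs each contain a $K_5$, and $\mathcal{B}^2_{n,4}\cup\mathcal{B}^3_{n,4}$ each contain a $K_{3,3}$, with only $\mathcal{B}^1_{n,4}$ requiring a short case analysis on the part sizes using $4$‑connectivity and the planar edge bound $m\le 3n-6$.
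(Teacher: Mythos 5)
Your strategy coincides with the paper's: invoke the last assertion of Claim~\ref{Ex-claim2} to obtain an extremal MC-coloring with a nontrivial edge $e$ inside a $4$-cut, contract $e$ and pass to the underlying simple graph $H$ of $G/e$ via Claim~\ref{Ex-claim3}, use the classification of $3$-connected but not $4$-connected planar graphs to pin down $H$, and then reconstruct $G$ from $H$ using Claim~\ref{Ex-claim4}, $4$-connectivity and planarity. Your handling of the easy direction and of the cases $H=K_2\vee P_{n-3}$ and $H=2K_1\vee P_{n-3}$ is essentially the paper's Case 1 (and your explicit verification that $\kappa(H)=3$, and the derivation $c\le 4$ followed by the contradiction ruling out $c=4$, are details the paper leaves implicit, so these are welcome additions).

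The genuine gap is the case $H\in\mathcal{P}_2$, i.e.\ $H=v\vee R$ with $R$ a $2$-connected outerplanar graph, which your proposal only announces (``the hard part will be this lifting analysis'') and never carries out; in the paper this case occupies the bulk of the proof. It requires: Lemma~\ref{Pla-vee}(4) to locate nonadjacent $2$-degree vertices of $R$; a $K_5$-minor argument to kill the subcase of three or more $2$-degree vertices; when the new vertex lies in $R$, a proof that every chord of $R$ is incident with it (so that $R$ is an apex over a path and Claim~\ref{Ex-claim4} applies); and, when the new vertex is the apex $v$, an analysis of the two arcs $L_1,L_2$ between the two $2$-degree vertices, a chord lemma (Claim~\ref{Ex-clm6}, proved via Claim~\ref{Ex-claim1}), and --- when both arcs have length at least three --- a \emph{second} contraction of a nontrivial edge $f$ of the tree containing $e$, followed by rerunning the $3$-connected classification on $G/f$. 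This last step uses the coloring-theoretic structure again (one must know $f$ exists and is nontrivial), so your suggested fallback of ``minimum degree $4$ plus the $K_5$/$K_{3,3}$ obstructions'' cannot by itself close this case. Since $c=3$ is exactly the case in which $H$ may lie in $\mathcal{P}_2$, the characterization direction ($mc(G)=m-n+3$ implies $G=2K_1\vee C_{n-2}$) remains unproved in your proposal, even though the upper bound $mc(G)\le m-n+3$ is essentially complete modulo the placements of the new vertex that you flag.
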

\begin{proof}
Suppose $G=\{u,v\}\vee H$, where $H$ is a $(n-2)$-cycle and $uv$ is not an edge of $G$.
Then there is a $2$-path $P$ connecting $u$ and $v$.
Let $L$ be a spanning tree of $H$.
Let $\Gamma$ be an edge-coloring such that $P$ and $L$ are all nontrivial trees of $G$.
Then $\Gamma$ is an MC-coloring of $G$ , which wastes $n-3$ colors.
Thus, $mc(G)\geq m-n+3$.
It is easy to verify that $G$ is neither a graph of $\mathcal{A}_{n,4}\cup \mathcal{B}^1_{n,4}\cup \mathcal{B}_{n,4}^2\cup \mathcal{B}_{n,4}^3$, nor a $4$-perfectly-connected graph.
Therefore, $mc(G)=m-n+3$.

Suppose $mc(G)\geq m-n+3$.
We prove that $G=2K_1\vee C_{n-2}$ below.
Suppose $S=\{x_1,x_2,x_3,x_4\}$ is a vertex-cut of $G$.
If $G[S]$ does not contain nontrivial edges under any extremal MC-colorings of $G$,
then by Claim \ref{Ex-claim2}, $mc(G)=m-n+2$.
If there is an extremal MC-coloring $\Gamma$ of $G$ such that $G[S]$ has a nontrivial edge, say  $e=x_1x_2$,
then by Lemma \ref{Ex-claim3} the underlying graph $H$ of $G/e$ satisfies that $mc(H)\geq e(H)-|H|+3$.
Since $H$ is a $3$-connected but not $4$-connected graph, $H$ is either $2K_1\vee P_{n-3}$ or $K_2\vee P_{n-3}$, or a graph of $\mathcal{P}_2$.
Since $G$ is a $4$-connected graph,
if there is a vertex $x$ of $H$ with $d_H(x)=3$, then $x$ is incident with the new vertex.

{\bf Case 1.} Either $H=2K_1\vee P_{n-3}$ or $H=K_2\vee P_{n-3}$.

From the assumption, $V(H)$ can be partitioned into two parts $A=\{u,v\}$ and $B$,
such that $H[B]=P_{n-3}$ and $H=H[A]\vee H[B]$. Here, $uv$ is an edge of $H$ if $H=K_2\vee P_{n-3}$,
and $uv$ is not an edge of $H$ if $H=2K_1\vee P_{n-3}$.
Let $H[B]=v_1e_1v_2e_2\cdots e_{n-4}v_{n-3}$.
If $|B|=3$, then $H=K_1\vee C_4$.
Since each vertex of $V(H)-\{v_2\}$ has degree three in $H$, $v_2$ is the new vertex and $G=K_2\vee C_4$,
a contradiction to the choice of $G$ as a planar graph.
Thus, $|B|\geq 4$ and $v_1,v_{n-3}$ are the only two vertices with degree 3 in $H$.
Therefore, the new vertex is either $u$ or $v$ (by symmetry, say $u$).
Since $G$ is a $4$-connected graph, $v_1$ (and also $v_{n-3}$) connects $x_1,x_2$ in $G$.
Then by Claim \ref{Ex-claim4}, suppose $x_1$ does not connect any vertices of $\{v_2,\cdots,v_{n-4}\}$ and
$x_2$ connects every vertex of $\{v_2,\cdots,v_{n-4}\}$.
Since $G$ is a $4$-connected graph,  $x_1$ connects $v$.
Then $G[B\cup x_1]$ is a cycle and thus $G=2K_1\vee C_{n-2}$.

{\bf Case 2.} $H\in \mathcal{P}_2$.

From the definition of $\mathcal{P}_2$, $H=v\vee R$, where $R$ is a $2$-connected outerplanar graph.
If $R=K_3$, then $|G|=5$.
Since $G$ is a $4$-connected graph, $G=K_5$, a contradiction.
Thus, $|R|\geq 4$.
Since $R$ is a $2$-connected outerplanar graph, by Lemma \ref{Pla-vee} (4), $R$ has two nonadjacent $2$-degree vertices.
Moreover, the boundary $C$ of $R$ is its Hamiltonian cycle.

{\bf Case 2.1.} $R$ has at least three vertices of degree two, say $u_1,u_2,u_3$.

Note that every $2$-degree vertex of $R$ is incident with the new vertex in $H$.
Thus, $v$ is the new vertex and each $u_i$ connects both $x_1$ and $x_2$ in $G$.
Note that $u_1,u_2$ and $u_3$ divide $C$ into three paths.
Let $H'$ be a graph obtained from $H$ by contracting all but one edge of each such path.
Then the underlying graph of $H'$ is $K_5$, i.e., $G$ has a $K_5$-minor, a contradiction.

{\bf Case 2.2.} $R$ has exactly two vertices of degree two and $v$ is not the new vertex.

Suppose $w_1,w_2$ are $2$-degree vertices of $R$.
Since $v$ is not the new vertex,
$w_1,w_2$ have a common neighbor $z$ in $R$,
and $z$ is the new vertex.

Let $P=R-z$.
We prove that $H=vz\vee P$ and $P$ is a path.
We first prove that $R=z\vee P$, i.e., each chord of $R$ is incident with $z$.
Suppose to the contrary, there is a chord $f=z_1z_2$ of $R$ such that $z\notin \{z_1,z_2\}$.
Then $z_1,z_2$ divide $C$ into two paths $L_1$ and $L_2$, say $z$ is an internal vertex of $L_1$.
Since $R$ is an outerplanar graph, $z$ does not connect any internal vertices of $L_2$ in $H$.
Furthermore, since $z$ is the new vertex, neither $x_1$ nor $x_2$ connects internal vertices of $L_2$ in $G$.
Thus, $\{v,z_1,z_2\}$ is a vertex-cut of $G$, a contradiction to the assumption that $G$ is a $4$-connected graph.
So, $R=z\vee P$ and $P$ is a path.
Since $v$ connects every vertex of $R$, we have $H=vz\vee P$.

Consider the graph $G$ below.
Since $w_1,w_2$ are $3$-degree vertices and $z$ is the new vertex of $H$, $w_1$ (and also $w_2$) connects $x_1$ and $x_2$ in $G$.
Let $I=V(P)-\{w_1,w_2\}$.
Since $H=vz\vee P$, by Claim \ref{Ex-claim4}, suppose $x_1$ does not connect any vertices of $I$ and $x_2$ connects every vertex of $I$.
Then $D=G[V(P)\cup x_1]$ is a $C_{n-2}$ and $G-v=x_2\vee D$.
Since $\{v,x_2\}\vee D$ is a spanning subgraph of $G$, $v$ does not connect $x_2$ by Lemma \ref{Pla-vee} (3).
This implies $G=\{x_2,v\}\vee D$, i.e., $G=2K_1\vee C_{n-2}$.

{\bf Case 2.3.} $R$ has exactly two vertices of degree two and $v$ is the new vertex.

Suppose $a,b$ are nonadjacent $2$-degree vertices of $R$.
Then $a,b$ divide $C$ into two paths, say $L_1$ and $L_2$.
Let $L_1=ae_1z_1e_2,\cdots z_se_{s+1}b$ and
$L_2=af_1w_1f_2,\cdots w_tf_{t+1}b$.

If $N_G(x_1)\cap (V(L_1)-\{a,b\})\neq \emptyset$ and $N_G(x_2)\cap (V(L_1)-\{a,b\})\neq \emptyset$,
then let $H'$ be a graph obtained from $H$ by contracting all edges of $C$ but $e_1,e_{s+1}$ and $f_1$.
Then the underlying graph of $H'$ is $K_5$, i.e., $G$ has a $K_5$-minor, a contradiction.
Thus, by symmetry, suppose $V(L_1)-\{a,b\}\subseteq N_G(x_1)$ and $N_G(x_2)\cap (V(L_1)-\{a,b\})\}= \emptyset$.
By the same reason, $N_G(x_1)\cap (V(L_2)-\{a,b\})\}\neq \emptyset$ and $N_G(x_2)\cap (V(L_2)-\{a,b\})\}\neq \emptyset$ will not happen.
Since $H$ is a $3$-connected graph,
$V(L_2)-\{a,b\}\subseteq N_G(x_2)$ and $N_G(x_1)\cap (V(L_2)-\{a,b\})\}=\emptyset$.
Therefore, $N_G(a)\cap V(R)=V(L_1)$ and $N_G(b)\cap V(R)=V(L_2)$.

If $R=K_1\vee P_{n-3}$, then $G=2K_1\vee C_{n-2}$.
Thus, we only need to prove that $R=K_1\vee P_{n-3}$ below.
\begin{claim}\label{Ex-clm6}
Suppose $l=n_1n_2$ is a chord of $R$. Then one end of $l$ is contained in $V(L_1)-\{a,b\}$ and the other end of $l$ is contained in $V(L_2)-\{a,b\}$.
\end{claim}
\begin{proof}
Suppose, to the contrary, $\{n_1,n_2\}\subseteq V(L_1)$.
Then $S'=\{x_1,x_2,n_1,n_2\}$ is a vertex-cut of $G$ with $|S'|=4$. However, $d_{G[S']}(x_1)=3$, a contradiction to Claim \ref{Ex-claim1}.
\end{proof}

If, by symmetry, $|L_1|=3$, i.e., $L_1=ae_1z_1e_2b$, then by Claim \ref{Ex-clm6},
$z_1$ connects every vertex of $L_2$.
Thus, $R=K_1\vee P_{n-3}$.

If $|L_1|,|L_2|\geq 4$.
Recall that $e=x_1x_2$ is a nontrivial edge under $\Gamma$.
Suppose $e$ is an edge of a nontrivial tree $T$.
Then there is a nontrivial edge $f$ of $T$ between $\{x_1,x_2\}$ and $R$.
By symmetry, suppose one end of $f$ is $x_1$ and the other end of $f$ is contained in $V(L_1)$.
Suppose $H'$ is the underlying graph of $G/f$.
Then $mc(H')\geq e(H')-|H'|+3$.
Since $H'$ is a $3$-connected planar graph,
$H'$ is either $2K_1\vee P_{n-3}$ or $K_2\vee P_{n-3}$, or a graph of $\mathcal{P}_2$.

Suppose $H'$ is either $2K_1\vee P_{n-3}$ or $K_2\vee P_{n-3}$.
Let $H'=A\vee P_{n-3}$, where $V(A)=\{y_1,y_2\}$.
If $x_2\in\{y_1,y_2\}$, say $x_2=y_2$, then $y_1$ connects every vertex of $V(R)-\{y_1\}$.
Thus, either $|L_1|=3$ or $|L_2|=3$, a contradiction.
If $x_2\notin\{y_1,y_2\}$, then $R=\{y_1,y_2\}\vee (R-y_1-y_2)$.
Since $|R|\geq 6$, we have $|R-y_1-y_2|\geq 4$.
Thus, $R$ has a $K_{2,3}$-minor, which contradicts that $R$ is an outerplanar graph.

Suppose $H'$ is a graph of $\mathcal{P}_2$.
Then $H'=y\vee H''$, where $H''$ is a $2$-connected outerplanar graph.
If $y=x_2$, then $x_2$ connects every vertex of $R$.
However, since $|L_1|\geq 4$ and $x_2$ does not connect any internal vertex of $L_1$ in $G$,
there is an internal vertex of $L_1$ does not connect $x_2$ in $H'$, a contradiction to the fact that $H'=x_2\vee H''$.
If $y\neq x_2$, then $y\in V(R)$ and thus $R=K_1\vee P_{n-3}$, a contradiction to the assumption that $|L_1|,|L_2|\geq 4$.
\end{proof}

\begin{lemma}
If $G$ is a $5$-connected planar graph, then $mc(G)=m-n+2$.
\end{lemma}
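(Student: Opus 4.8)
The plan is to establish the upper bound $mc(G)\le m-n+2$, since the reverse inequality holds for every connected graph. I would argue by contradiction, assuming $mc(G)\ge m-n+3$. Because $G$ is planar its connectivity is at most five, so $G$ being $5$-connected forces $\kappa(G)=5$, and hence $G$ admits a vertex-cut $S=\{x_1,\dots,x_5\}$ of size five. Fixing a \emph{simple} extremal MC-coloring $\Gamma$ of $G$, Claim~\ref{Ex-claim2}(2) shows that if $G[S]$ carried no nontrivial edge under $\Gamma$ then $mc(G)=m-n+2$; so the assumption forces a nontrivial edge $e=x_1x_2$ inside $G[S]$.

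Next I would pass to the contraction. Writing $mc(G)=e(G)-|G|+x$ with $x\ge 3$, Claim~\ref{Ex-claim3} applied to the nontrivial edge $e$ gives that the underlying simple graph $H$ of $G/e$ satisfies $mc(H)\ge e(H)-|H|+3$. Since contracting one edge of a $5$-connected graph yields a $4$-connected graph, and since $(S\setminus\{x_1,x_2\})\cup\{z\}$, where $z$ is the vertex created by the contraction, is a vertex-cut of $H$ of size four, $H$ is a $4$-connected but not $5$-connected planar graph. By the preceding lemma classifying such graphs, the inequality $mc(H)\ge e(H)-|H|+3$ can hold only in the extremal case, forcing $H=2K_1\vee C_{n-3}$.

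The decisive step is to locate the contracted vertex $z$ in $H=2K_1\vee C_{n-3}$ and to rule out every possibility. I would first observe that every minimum (size-four) vertex-cut of $2K_1\vee C_{n-3}$ consists of the two apices $u,v$ together with two nonadjacent cycle vertices, because deleting a set missing an apex leaves all remaining vertices attached to that apex. Since $z$ lies in the four-cut $(S\setminus\{x_1,x_2\})\cup\{z\}$, either $z$ is an apex or $z$ is one of the two cut cycle-vertices. In each case I would use $d_G(x_1),d_G(x_2)\ge 5$ to force both $x_1$ and $x_2$ to be adjacent, in $G$, to \emph{all} neighbors of $z$ in $H$: if $z=u$, the degree bound makes every cycle vertex adjacent to both $x_1$ and $x_2$, and together with the apex $v$ one reads off a $K_{3,3}$ on parts $\{x_1,x_2,v\}$ and any three cycle vertices; if $z$ is a cut cycle-vertex with $H$-neighbors $c_2,c_{n-3},u,v$, then both $x_i$ are adjacent to all four of these, and $\{u,v,x_1\}$ versus $\{x_2,c_2,c_{n-3}\}$ forms a $K_{3,3}$. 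Either way $G$ contains $K_{3,3}$, contradicting planarity, so the assumption $mc(G)\ge m-n+3$ collapses.

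I expect the main obstacle to be the case analysis on the position of $z$: specifically, verifying that the only size-four cuts of $2K_1\vee C_{n-3}$ have the claimed form, and then checking that in each placement the degree bound from $5$-connectivity genuinely supplies all nine edges of a $K_{3,3}$. This includes the care needed to ensure the six chosen vertices are distinct, which is automatic once one recalls that a $5$-connected planar graph has at least twelve vertices, so the cycle $C_{n-3}$ is long.
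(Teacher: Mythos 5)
Your proposal is correct and follows essentially the same route as the paper's proof: assume $mc(G)\ge m-n+3$, use the claim on cuts without nontrivial edges to force a nontrivial edge $e$ inside a $5$-cut, contract $e$ and invoke the contraction claim together with the classification of $4$-connected planar graphs to get $H=2K_1\vee C_{n-3}$, and then derive a contradiction with planarity. The only difference is at the last step, and it works in your favor: where the paper tersely asserts that the new vertex must be an apex and then exhibits the nonplanar subgraph $K_2\vee C_{n-3}$, you treat both placements of the contracted vertex (apex or cycle vertex) and produce an explicit $K_{3,3}$ in each case, thereby filling in a detail the paper leaves implicit.
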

\begin{proof}
Suppose $mc(G)\geq m-n+3$.
Let $S=\{v_1,\cdots,v_5\}$ be a vertex-cut of $G$.
If $G[S]$ does not contain nontrivial edges under any extremal MC-colorings of $G$, then by Claim \ref{Ex-claim2}, $mc(G)=m-n+2$, a contradiction.
Otherwise, there is a nontrivial edge in $G[S]$, say $e=v_1v_2$.
Let $H$ be the underlying graph of $G/e$.
Then by Claim \ref{Ex-claim3}, $mc(H)\geq e(H)-|H|+3$.
Since $H$ is a $4$-connected but not $5$-connected graph, we have $mc(H)=e(H)-|H|+3$.
Thus, $H=2K_1\vee C_{n-2}$, say $H=\{u,v\}\vee C$, where $C =C_{n-2}$.
Since each vertex of $C$ has degree 4 in $H$, either $u$ or $v$ is the new vertex.
By symmetry, let $u$ be the new vertex.
Thus, $v_1,v_2$ connect every vertex of $C$, i.e., $e\vee C$ is a subgraph of $G$, a contradiction to the choice that $G$ is planar.
\end{proof}

\begin{theorem}
Suppose $G$ is a connected planar graph. Then $mc(G)\leq m-n+4$ and the following results hold.
\begin{enumerate}
\item If $G$ is not a $2$-connected graph, then $mc(G)=m-n+2$;
\item if $G$ is a $2$-connected but not $3$-connected graph, then $m-n+2\leq mc(G)\leq m-n+3$ and $mc(G)=m-n+3$ if and only if $G\in \mathcal{P}_1$;
\item if $G$ is a $3$-connected but not $4$-connected graph, then $m-n+2\leq mc(G)\leq m-n+4$. Moreover, $mc(G)=m-n+4$ if and only if $G=K_2\vee P_{n-2}$,
    and $mc(G)=m-n+3$ if and only if either $G\in \mathcal{P}_2$, or $G=2K_1\vee P_{n-2}$;
\item if $G$ is a $4$-connected but not $5$-connected graph, then $m-n+2\leq mc(G)\leq m-n+3$, and $mc(G)=m-n+3$ if and only if $G=2K_1\vee C_{n-2}$;
\item if $G$ is a $5$-connected graph, then $mc(G)=m-n+2$.
\end{enumerate}
\end{theorem}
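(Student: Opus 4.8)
The plan is to assemble this classification directly from the per-connectivity results already established, organizing everything by the value $k=\kappa(G)$. Since $G$ is planar it has a vertex of degree at most five, so $\delta(G)\le 5$ and hence $\kappa(G)\le 5$; this leaves only the five cases $k\in\{1,2,3,4,5\}$ to treat. The universal lower bound $mc(G)\ge m-n+2$ is recorded in the introduction (color a spanning tree monochromatically and give every remaining edge its own trivial color), so within each case only the upper bound and, where relevant, the extremal characterization need attention.

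First I would settle the global bound $mc(G)\le m-n+4$ by a short dichotomy. If $G$ is not $4$-connected, then $k\le 3$ and Theorem \ref{CY-2}(2) gives $mc(G)\le m-n+k+1\le m-n+4$. If $G$ is $4$-connected, then being planar it is $4$-connected but not $5$-connected, and the lemma on $4$-connected planar graphs yields $mc(G)\le m-n+3$; if $G$ is $5$-connected, the final lemma gives $mc(G)=m-n+2$. In every case $mc(G)\le m-n+4$, establishing the displayed inequality.

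Then I would go through the items one by one. For (1), a connected but not $2$-connected graph has a cut-vertex, so $mc(G)=m-n+2$ by Theorem \ref{CY-1}(5). For (2), Theorem \ref{CY-2}(2) with $k=2$ gives $mc(G)\le m-n+3$, and the equivalence $mc(G)=m-n+3\iff G\in\mathcal{P}_1$ is exactly the lemma proved immediately after the definition of $\mathcal{P}_1$. For (3), Theorem \ref{CY-2}(2) with $k=3$ supplies the upper bound $m-n+4$, while the lemma classifying $3$-connected but not $4$-connected planar graphs pins down $G=K_2\vee P_{n-2}$ at the value $m-n+4$ and $G\in\mathcal{P}_2$ or $G=2K_1\vee P_{n-2}$ at the value $m-n+3$. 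Items (4) and (5) are, essentially verbatim, the two final lemmas on $4$-connected (but not $5$-connected) and $5$-connected planar graphs, giving $mc(G)=m-n+3\iff G=2K_1\vee C_{n-2}$ and $mc(G)=m-n+2$, respectively.

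At this stage there is no fresh combinatorial obstacle: all the structural work lives in the preceding lemmas, and the theorem is chiefly a matter of bookkeeping. The only points I would take care over are, first, confirming that the five connectivity values exhaust all cases and that the ranges quoted in (2)--(4) are consistent with both the universal lower bound $m-n+2$ and the connectivity-based upper bounds; and second, making sure the ``if'' directions of the characterizations are genuinely in hand, namely that each listed extremal graph ($K_2\vee P_{n-2}$, $2K_1\vee P_{n-2}$, graphs of $\mathcal{P}_1$ and $\mathcal{P}_2$, and $2K_1\vee C_{n-2}$) really is planar and attains the asserted $mc$-value. These facts follow from Lemma \ref{Pla-vee} together with the explicit colorings constructed in the respective lemmas. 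Collecting the five cases then yields the stated five-item classification.
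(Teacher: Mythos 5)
Your proposal is correct and matches the paper's intent exactly: the theorem is stated there as a summary whose proof is precisely the assembly of Theorems \ref{CY-1}(5) and \ref{CY-2}(2) with the four planar lemmas of Section 3, organized by $\kappa(G)\in\{1,\dots,5\}$ (using $\kappa(G)\le\delta(G)\le 5$ for planar graphs). Your handling of the global bound $mc(G)\le m-n+4$ via the dichotomy on $4$-connectivity is also the argument implicit in the paper's opening remarks of Section 3.
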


For ease of reading, the classification of planar graphs are summarized in the following table (remember that the connectivity $\kappa(G)$
of a planar graph $G$ is at most 5).

\begin{table}[!htbp]
\centering
\begin{tabular}{|c|c|c|c|c|c|}
\hline\backslashbox{$mc(G)$\kern-2em}{$\kappa(G)$}
  &$1$ &  $2$ &  $3$ &  $4$ & $5$\\
 \hline
 $m-n+4$ &$\emptyset$ & $\emptyset$ &$G=K_2\vee P_{n-2}$ & $\emptyset$ & $\emptyset$ \\
 \hline
 $m-n+3$ & $\emptyset$ & $G\in\mathcal{P}_1$ &\tabincell{c}{either $G\in\mathcal{P}_2$,\\or $G=2K_1\vee P_{n-2}$}& $G=2K_1\vee C_{n-2}$ & $\emptyset$\\
 \hline
 $m-n+2$ & all & all but the above & all but the above & all but the above & all \\
 \hline
\end{tabular}
\caption{The classification of planar graphs.}
\end{table}

\end{document}